\newtheorem{theorem}{Theorem}[section]
\newtheorem{corollary}[theorem]{Corollary}
\newtheorem{lemma}[theorem]{Lemma}
\newtheorem{definition}[theorem]{Definition}
\DeclareMathOperator{\Aut}{Aut}
\DeclareMathOperator{\Hom}{Hom}
\DeclareMathOperator{\Vol}{Vol}
\begin{document}
	\title{Approximation Schemes for Path Integration on Riemannian Manifolds}
	\author{Juan Carlos Sampedro} \thanks{The author has been supported by the Research Grant PGC2018-097104-B-I00 of the Spanish Ministry of Science, Technology and Universities, by the Institute of Interdisciplinar Mathematics of Complutense University and by PhD Grant PRE2019\_1\_0220 of the Basque Country Government.}
	\address{Department of Mathematical Analysis and Applied Mathematics \\
		Faculty of Mathematical Science \\
		Complutense University of Madrid \\
		28040-Madrid \\
		Spain.}
	\email{juancsam@ucm.es}

\begin{abstract}
	\centerline{\emph{Truth is much too complicated to allow anything but approximations.}}
	\centerline{John von Neumann}
	\vspace{0.5cm}
	In this paper, we prove a finite dimensional approximation scheme for the Wiener measure on closed Riemannian manifolds, establishing a generalization for $L^{1}$-functionals, of the approach followed by Andersson and Driver on \cite{1}. We follow a new approach motived by categorical concept of colimit.
\end{abstract}

\keywords{Colimit, Finite Dimensional Approximations, Riemannian Manifolds, Stratonovich Stochastic Integral, Wiener Measure}
\subjclass[2010]{28C20 (primary), 58D20, 46B25, 60H05 (secondary)}

\maketitle

\section{Introduction}

 In 1920,  N. Wiener, based on Daniell's recent interpretation of integral \cite{5}-\cite{7}, defined in \cite{35} an integral for bounded and continuous functionals $F:\mathcal{C}_{\mathbf{x}_{0}}[0,1]\to\mathbb{R}$, where the notation $\mathcal{C}_{\mathbf{x}_{0}}[0,1]$ stands for the space of continuous functions $\mathbf{u}:[0,1]\to\mathbb{R}$ with base point $\mathbf{u}(0)=\mathbf{x}_{0}$. In later papers \cite{35}-\cite{40}, he connected this notion to that of Brownian motion and he defined the so-called Wiener process. In posterior works, he generalized this results defining a probability measure $\mu_{\mathbf{x}_{0}}
 $ on the measurable space $(\mathcal{C}_{\mathbf{x}_{0}}[0,1],\mathcal{B}_{\mathbf{x}_{0}})$, where $\mathcal{B}_{\mathbf{x}_{0}}$ stands for the Borel $\sigma$-algebra of $\mathcal{C}_{\mathbf{x}_{0}}[0,1]$ endowed with the uniform convergence topology. This measure is characterized by the following property: For each finite partition $\mathcal{T}=\{t_{1}<t_{2}<\cdots<t_{n}\}\subset(0,1]$ and each family $(B_{t})_{t\in\mathcal{T}}$ of Borel subsets of $\mathbb{R}$, the identity
\begin{equation}\label{E1}
\mu_{\mathbf{x}_{0}}\left(\pi^{-1}_{\mathcal{T}}(B_{t})_{t\in \mathcal{T}} \right)=\int_{B_{t_{1}}}\overset{(n)}{\cdots}\int_{B_{t_{n}}}\prod_{i=1}^{n}p_{t_{i}-t_{i-1}}(x_{i},x_{i-1})\prod_{i=1}^{n}dx_{i}, \quad t_{0}=0, \ x_{0}=\mathbf{x}_{0},
\end{equation}
\noindent holds, where $p_{t}(x,y)$ is the heat kernel of $\mathbb{R}$ and $\pi_{\mathcal{T}}: \mathcal{C}_{\mathbf{x}_{0}}[0,1] \to \mathbb{R}^{\mathcal{T}}$ is the projector defined by 
$$\pi_{\mathcal{T}}: \mathcal{C}_{\mathbf{x}_{0}}[0,1] \longrightarrow \mathbb{R}^{\mathcal{T}}, \quad \pi_{\mathcal{T}}(\mathbf{u}):=(\mathbf{u}(t))_{t\in \mathcal{T}}.$$
\par At first, it seems there is no easy way to compute the integral of an arbitrary measurable functional $F:\mathcal{C}_{\mathbf{x}_{0}}[0,1]\to\mathbb{R}$. Nevertheless, Wiener proved in \cite{35} an analogue of Jessen's formula \cite{16,J} for the measure $\mu_{\mathbf{x}_{0}}$. More explicitly, he proved that given a bounded and continuous functional on $\mathcal{C}_{\mathbf{x}_{0}}[0,1]$ and a partition $\mathcal{T}=\left\{\mathcal{T}_{n}\right\}_{n\in\mathbb{N}}$, $\mathcal{T}_{n}=\{t^{n}_{i}\}_{i=1}^{n}$ of $[0,1]$ with mesh zero, $$\lim_{n\to\infty}\max_{2\leq i \leq n}|t_{i}^{n}-t_{i-1}^{n}|=0,$$ the integral of $F$ can be computed though finite dimensional integrals via
\begin{equation*}
\int_{\mathcal{C}_{\mathbf{x}_{0}}[0,1]}F(\mathbf{u})\ d\mu_{\mathbf{x}_{0}}(\mathbf{u})=\lim_{n\to\infty}\int_{\mathbb{R}^{n}}F_{n}(x_{1},x_{2},...,x_{n})\prod_{i=1}^{n}p_{t^{n}_{i}-t^{n}_{i-1}}(x_{i},x_{i-1})\prod_{i=1}^{n}dx_{i}, 
\end{equation*}
\noindent where the functions $F_{n}:\mathbb{R}^{n}\to\mathbb{R}$ are defined by $$F_{n}(x_{1},x_{2},...,x_{n}):=F(\mathbf{u}_{(x_{1},x_{2},...)}),$$ where $\mathbf{u}_{(x_{1},x_{2},...)}$ denotes the linear interpolation of the points $x_{1}, x_{2}, ..., x_{n}$, for each $n\in\mathbb{N}$. In \cite{J}, the author generalized this formula to every $L^{1}$-functional proving that for each $F\in L^{1}(\mathcal{C}_{\mathbf{x}_{0}}[0,1],\mu_{\mathbf{x}_{0}})$, there exists a finite dimensional functional sequence $(F_{n})_{n\in\mathbb{N}}\in\bigtimes_{n\in\mathbb{N}}L^{1}(\mathbb{R}^{n},\mu^{n}_{\mathbf{x}_{0}})$ such that
\begin{equation}\label{EE}
\int_{\mathcal{C}_{\mathbf{x}_{0}}[0,1]}F \ d\mu_{\mathbf{x}_{0}}=\lim_{n\to\infty}\int_{\mathbb{R}^{n}}F_{n} \ d\mu^{n}_{\mathbf{x}_{0}} \ \  \text{ where } \ d\mu^{n}_{\mathbf{x}_{0}}=\prod_{i=1}^{n}p_{t^{n}_{i}-t^{n}_{i-1}}(x_{i},x_{i-1})\prod_{i=1}^{n}dx_{i}.
\end{equation}

\par A similar discussion can be done for the category of Riemannian Manifolds. Given a compact connected Riemannian manifold $(M,g)$ (closed Riemannian manifold for short) of dimension $m$, we can construct, analogously to the case of $[0,1]$, the measure space $(\mathcal{C}_{\mathbf{x}_{0}}(M),\mu_{\mathbf{x}_{0}})$. Here, the notation $\mathcal{C}_{\mathbf{x}_{0}}(M)$ denotes the space of continuous curves $\gamma:[0,1]\to M$ beginning at $\mathbf{x}_{0}$ and $\mu_{\mathbf{x}_{0}}$ the Wiener measure on $\mathcal{C}_{\mathbf{x}_{0}}(M)$, i.e., a measure satisfying an analogue of equation \eqref{E1} for this setting (see section 2 for further details). Similar versions of Jessen type formula have been developed for the category of Riemannian manifolds in \cite{1}. In that article, Andersson and Driver proved that given a bounded and continuous functional $F:\mathcal{C}_{\mathbf{x}_{0}}(M)\to\mathbb{R}$, the identity
\begin{equation}\label{E2}
\int_{\mathcal{C}_{\mathbf{x}_{0}}(M)}F \ d\mu_{\mathbf{x}_{0}}=\lim_{n\to \infty}\int_{H_{\mathcal{T}_{n}}(M)}F(\sigma) \ d\nu_{\mathcal{T}_{n}}(\sigma)
\end{equation}
holds, where $(H_{\mathcal{T}_{n}}(M),\nu_{\mathcal{T}_{n}})$ is a finite dimensional measure space based on the geometrical data of $(M,g)$ and $\mathcal{T}=\left\{\mathcal{T}_{n}\right\}_{n\in\mathbb{N}}, \mathcal{T}_{n}= \{t^{n}_{i}\}_{i=1}^{n}$, is a partition of $[0,1]$ with mesh zero. Roughly, $H_{\mathcal{T}_{n}}(M)$ is the space of piecewise geodesics paths in $M$, $\sigma:[0,1]\to M$, which change directions only at the partition points $\mathcal{T}_{n}= \{t^{n}_{i}\}_{i=1}^{n}$. The precise definition of the pair $(H_{\mathcal{T}_{n}}(M),\nu_{\mathcal{T}_{n}})$ will be given in section 6.
\par After Andersson and Driver, several developments and generalizations have been done by several authors, for instance this scheme has been generalized to heat kernels on vector bundles in \cite{B1,B2} and further developed in \cite{Li,L1,L2,L3}. This finite dimensional approximations of path integrals have a great impact in theoretical physics and in particular are extremely useful for the Feynman path integral approach to quantum field theory \cite{10,11}.
\par The aim of this article is to establish a generalization of equation \eqref{E2} for every integrable functional $F:\mathcal{C}_{\mathbf{x}_{0}}(M)\to\mathbb{R}$, not necessarily bounded and continuous in the vein of the analogous result \eqref{EE} for the classical Wiener measure proved in \cite{J}. To obtain this result, we use a categorical point of view of this type of approximations. We prove that for $1\leq p<\infty$, the space $L^{p}(\mathcal{C}_{\mathbf{x}_{0}}(M),\mu_{\mathbf{x}_{0}})$ is the colimit of certain diagram consisting on $L^{p}$-spaces of finite dimensional data associated with $M$.
\par This generalization and structural result has many applications in stochastic analysis. For instance, this scheme allows to embed the notion of the Stratonovich stochastic integral $$\int_{0}^{1} f(X_{t})\circ dX_{t}\in L^{2}(\mathcal{C}_{\mathbf{x}_{0}}(M),\mu_{\mathbf{x}_{0}}),$$ in our approximation techniques, where $X$ denotes the Wiener process on $(\mathcal{C}_{\mathbf{x}_{0}}(M),\mu_{\mathbf{x}_{0}})$. Indeed, since the Stratonovich stochastic integral is not, in general, a continuous and bounded functional, we cannot apply the approximation \eqref{E2} directly, but we can apply the techniques of this article. 
\par The paper is organized as follows. In section two, we present the abstract results about colimits in category theory that will be used throughout this article and we recall the definition and construction of the Wiener measure on closed Riemannan manifolds. In section three, we prove that for $1\leq p<\infty$, the space $L^{p}(\mathcal{C}_{\mathbf{x}_{0}}(M),\mu_{\mathbf{x}_{0}})$ is the colimit of a diagram consisting on $L^{p}$-spaces of Cartesian products of a finite number of $M$-factors. In section four, we derive a finite dimensional approximation formula of type \eqref{EE} for integrable functionals $F\in L^{1}(\mathcal{C}_{\mathbf{x}_{0}}(M),\mu_{\mathbf{x}_{0}})$, via a particular realization of the colimit of the diagram defined in the preceding section. In section five, we apply our approximation scheme to Stratonovich stochastic integrals. The end of this article, section six, is dedicated to recall Andersson and Driver's scheme and to adapt the results developed in sections three and four to their framework. In particular, we obtain a generalization of \eqref{E2} for every integrable functional.
\par It is convenient to remark that all the results obtained in this article can be easily adapted to the category of Riemannian manifolds with boundary and to the case of continuous paths with fixed initial and end point, the so-called Pinned Wiener spaces. See for instance \cite{B3} for the definition and construction of the Wiener measure on this spaces.

\section{Preliminaries}

In this short section, we will recall some basic facts about colimits and the Wiener measure on Riemannian manifolds that will be used trough this article. It will be also useful to fix notation.

\subsection{Basic notions about colimits}

Let $\mathfrak{C}$ be a fixed category and $I$ a directed set. A diagram is a functor $F:I\to \mathfrak{C}$. It can be represented as $(X_{i},\varphi_{ij})$ for a family of objects indexed by $I$, $\{X_{i}:i\in I\}$, and for each $i\leq j$ a morphism $\varphi_{ij}:X_{i}\to X_{j}$ such that $\varphi_{ii}$ is the identity on $X_{i}$ and $\varphi_{ik}=\varphi_{jk}\circ \varphi_{ij}$ for each $i\leq j\leq k$. In general, we can consider diagrams as functors $F:\mathfrak{J}\to\mathfrak{C}$ indexed by a general category $\mathfrak{J}$, however, diagrams indexed by directed sets are enough for our purposes.
A cocone is a pair $(X,\phi_{i})$ where $X$ is an object of $\mathfrak{C}$ and $\phi_{i}:X_{i}\to X$ is a morphism of $\mathfrak{C}$ such that $\phi_{i}=\phi_{j}\circ \varphi_{ij}$ for every $i\leq j$. The class of cocones of a given diagram forms itself a category. A colimit of the diagram $(X_{i},\varphi_{ij})$ is defined to be a cocone $(L, \psi_{i})$ characterized by the following universal property: For any cocone $(X,\phi_{i})$, there exists a unique morphism $\varphi_{X}:L\to X$ making the diagram of Figure 1 commutative.
\begin{figure}[h!]
\[
\xymatrix@C+1em@R+1em{ 
	X_i \ar^{\varphi_{ij}}[rr] \ar^{\psi_i}[dr] \ar@/_1em/_{\phi_i}[ddr] & & X_j \ar_{\psi_j}[dl] \ar@/^1em/^{\phi_j}[ddl] \\
	&L \ar@{-->}^{\varphi_{X}}[d] & \\
	& X &
}
\]
\caption{Diagram of the Universal Property}
\label{F1}
\end{figure}

The colimits of a given diagram can be also characterized as initial objects in the category of cocones. The colimit of a given diagram does not necessarily exist, however if the colimit exists, it is unique up to a unique isomorphism in the category of cocones. Any given isomorphism class of the colimit is called a \textit{realization}. The colimit of a given diagram $(X_{i},\varphi_{ij})$ is commonly denoted as $L\equiv \underrightarrow{\lim} \ X_{i}$.
\par In this article, we will consider the category $\mathfrak{Ban}$ whose objects are Banach spaces and whose morphisms are linear isometries. The proof of the existence of the colimit for every diagram on $\mathfrak{Ban}$ can be found in \cite[App. L]{WO} or in \cite{JC} and references therein.
\par Though this article, we will make use of the following simple Lemma.
\begin{lemma}
	\label{L2.1}
	Let $(X_{i},\varphi_{ij})$ be a diagram on $\mathfrak{Ban}$. Then a cocone $(X,\phi_{i})$ is a realization of the colimit of $(X_{i},\varphi_{ij})$ if and only if $\bigcup_{i\in I}\phi_{i}(X_{i})$ is dense in $X$.
\end{lemma}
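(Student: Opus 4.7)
The plan is to prove both implications by exploiting the universal property together with the fact that morphisms in $\mathfrak{Ban}$ are linear isometries, whose images on Banach spaces are automatically closed subspaces.

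For the forward direction, I would assume $(X,\phi_{i})$ is a realization of the colimit and set $Y := \overline{\bigcup_{i\in I}\phi_{i}(X_{i})}$, a closed subspace of $X$ and hence a Banach space. Since each $\phi_{i}$ factors through $Y$ as a linear isometry, $(Y,\phi_{i})$ is itself a cocone, so the universal property yields a unique morphism $\varphi_{Y}:X\to Y$ with $\varphi_{Y}\circ\phi_{i}=\phi_{i}$ viewed in $Y$. Composing with the isometric inclusion $\iota:Y\hookrightarrow X$ produces a self-morphism $\iota\circ\varphi_{Y}:X\to X$ of the cocone $(X,\phi_{i})$. By the uniqueness clause of the universal property applied to $X$ as its own target, this composition must equal $\mathrm{id}_{X}$, forcing $\iota$ to be surjective and hence $Y=X$.

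For the backward direction, I would let $(L,\psi_{i})$ denote a colimit (which exists by the reference to \cite{WO}) and apply the universal property of $L$ to the cocone $(X,\phi_{i})$, producing a unique morphism $\varphi_{X}:L\to X$ with $\varphi_{X}\circ\psi_{i}=\phi_{i}$. Since $\varphi_{X}$ is a morphism in $\mathfrak{Ban}$, it is a linear isometry, hence injective and with closed image. That image contains $\bigcup_{i\in I}\phi_{i}(X_{i})$, which is dense in $X$ by hypothesis, so $\varphi_{X}$ is surjective and therefore a bijective linear isometry, i.e., an isomorphism in $\mathfrak{Ban}$. Its inverse automatically satisfies $\varphi_{X}^{-1}\circ\phi_{i}=\psi_{i}$, making $\varphi_{X}$ an isomorphism in the category of cocones; hence $(X,\phi_{i})$ is itself a realization of the colimit.

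The only genuine subtlety is that morphisms in $\mathfrak{Ban}$ are restricted to linear isometries, which is actually what makes the argument work cleanly: isometries from Banach spaces have closed image, so density plus injectivity upgrades to surjectivity for free, and the uniqueness clause of the universal property applies to trap the composite $\iota\circ\varphi_{Y}$ as the identity. I do not expect any substantial obstacle beyond checking that all the maps constructed are indeed isometries (which is immediate in both directions from the commuting relations and the hypothesis that the $\phi_{i}$ and $\psi_{i}$ already are).
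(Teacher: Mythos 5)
Your proof is correct, but the two implications split in how they compare with the paper. For the ``only if'' direction you do essentially what the paper does: corestrict the cocone to $Y:=\overline{\bigcup_{i}\phi_{i}(X_{i})}$, obtain the mediating morphism $\varphi_{Y}:X\to Y$, and conclude $Y=X$; your extra step of composing with the inclusion $\iota$ and trapping $\iota\circ\varphi_{Y}=\mathrm{id}_{X}$ by the uniqueness clause is exactly the detail behind the paper's remark that ``it is straightforward to prove that $\varphi_{Y}$ is an isomorphism,'' so you have filled in what the paper leaves implicit. For the ``if'' direction your route is genuinely different: the paper verifies the universal property directly, defining the mediating morphism on the dense union by $\varphi_{Y}\circ\phi_{i}:=\rho_{i}$ and (implicitly) checking well-definedness, linearity and the isometric bound there before extending by density --- checks that use directedness of $I$ and injectivity of the $\phi_{k}$. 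You instead invoke the existence of a colimit $(L,\psi_{i})$ (which the paper does cite from Wegge-Olsen/Castillo, so this is fair game), take the canonical comparison $\varphi_{X}:L\to X$, and upgrade it to an isomorphism of cocones from the facts that isometric images of Banach spaces are closed and that the image contains the dense union; since initial objects are stable under isomorphism, $(X,\phi_{i})$ is a colimit. Your version is cleaner in that it avoids the well-definedness and extension-by-density bookkeeping, at the price of presupposing the existence theorem for colimits in $\mathfrak{Ban}$, whereas the paper's argument is self-contained and in effect reconstructs the mediating morphism by hand. Both are sound; just be aware that if the lemma were meant to double as the existence proof for this particular diagram, your backward direction would be circular, while the paper's is not.
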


\begin{proof}
	Suppose that $(X,\phi_{i})$ defines a realization of the colimit of $(X_{i},\varphi_{ij})$. Consider the cocone $(Y,\rho_{i})$ where
	\begin{equation*}
	Y:=\overline{\bigcup_{i\in I}\phi_{i}(X_{i})}\subset X,
	\end{equation*}
	and where $\rho_{i}:X_{i}\to Y$ are the canonical inclusions. By the universal property there exists a unique isometry $\varphi_{Y}:X\to Y$ making the diagram of Figure \ref{F1} commutative. It is straightforward to prove that the morphism $\varphi_{Y}$ is in fact an isomorphism, hence $\bigcup_{i\in I}\phi_{i}(X_{i})$ is dense in $X$. On the other hand, take a cocone $(X,\phi_{i})$ with $\bigcup_{i\in I}\phi_{i}(X_{i})$ dense in $X$. For any other cocone $(Y,\rho_{i})$, define the morphism $\varphi_{Y}:X\to Y$ by 
	\begin{equation*}
	\varphi_{Y}\circ\phi_{i}:=\rho_{i}, \quad i\in I.
	\end{equation*}
	Then it is easy to see that the morphism $\varphi_{Y}$ is the unique morphism making the diagram of Figure \ref{F1} commutative. Hence $(X,\phi_{i})$ is a colimit. This concludes the proof.
\end{proof}

\subsection{Wiener measure on Riemannian manifolds}
 
Along this article, $(M,g)$ denotes a compact connected Riemannian manifold (closed Riemannian manifold for short) with a fixed base point $\mathbf{x}_{0}\in M$. The notation $\mathcal{C}_{\mathbf{x}_{0}}(M)$ stands for the space of continuous paths $\gamma\in\mathcal{C}([0,1],M)$ satisfying $\gamma(0)=\mathbf{x}_{0}$, $\mathcal{B}_{\mathbf{x}_{0}}$ for the Borel $\sigma$-algebra of $\mathcal{C}_{\mathbf{x}_{0}}(M)$ with respect to the uniform convergence topology given by the induced metric of $M$ and $\mu_{\mathbf{x}_{0}}$ for the Wiener measure on $M$ with base point $\mathbf{x}_{0}$.
\par We recall the definition of the measure space $(\mathcal{C}_{\mathbf{x}_{0}}(M),\mathcal{B}_{\mathbf{x}_{0}},\mu_{\mathbf{x}_{0}})$. Consider in $(M,g)$ the measure $\mu: \mathcal{B}_{M}\to [0,+\infty]$ induced by the metric $g$, where $\mathcal{B}_{M}$ denotes the Borel $\sigma$-algebra of $M$. This measure is locally given by the expression
\begin{equation*}
d\mu=\sqrt{\det(g_{ij})_{ij}} \ dx_{1}\wedge \cdots \wedge dx_{m}
\end{equation*}
where $m$ is the dimension of $M$ and $(g_{ij})_{ij}$ is the matrix of $g$ in a local chart. For each closed Riemannian manifold $(M,g)$, there exists a heat kernel $p_{t}(x,y)$, for $t>0$, $x,y\in M$, i.e., the Schwartz kernel of the selfadjoint operator $e^{t\Delta}$ on $L^{2}(M,\mu)$, where $\Delta$ denotes the Laplace-Beltrami operator on $(M,g)$. The proof of the existence of this map can be found in \cite{B3, 1.7}. A well-known consequence of the Kolmogorov extension Theorem \cite[Th. 6.1]{Y}, is the existence of a probability measure $$\mu_{\mathbf{x}_{0}}:\mathcal{B}\longrightarrow[0,+\infty]$$ on $(M^{[0,1]},\mathcal{B})$, where $\mathcal{B}$ denotes the Borel $\sigma$-algebra of $M^{[0,1]}$ with respect to the product topology, satisfying the identity
\begin{equation}\label{E9}
\mu_{\mathbf{x}_{0}}\left(\pi^{-1}_{\mathcal{T}}(B_{t})_{t\in \mathcal{T}\backslash\{0\}} \right)=\int_{B_{t_{1}}}\overset{(n)}{\cdots}\int_{B_{t_{n}}}\prod_{j=1}^{n}p_{t_{j}-t_{j-1}}(x_{j},x_{j-1})\prod_{j=1}^{n}d\mu(x_{j}), \quad x_{0}=\mathbf{x}_{0},
\end{equation}
for each finite partition $\mathcal{T}=\{0=t_{0}<t_{1}<\cdots<t_{n}\}\subset[0,1]$ and each family of Borel subsets $(B_{t})_{t\in\mathcal{T}\backslash\{0\}}\subset \mathcal{B}_{M}$. Here and in the sequel, the notation $\pi_{\mathcal{T}}$ stands for the projector defined by
\begin{equation}\label{QQ}
\pi_{\mathcal{T}}:M^{[0,1]} \longrightarrow M^{\mathcal{T}\backslash\{0\}}, \quad \pi_{\mathcal{T}}(\gamma_{t})_{t\in[0,1]}:=(\gamma_{t})_{t\in\mathcal{T}\backslash\{0\}}.
\end{equation}
Since $(M,g)$ is compact, it is, in particular, stochastically complete (see for instance \cite{1.7}), and therefore
$$\int_{M}p_{t}(x,y)d\mu(y)=1$$ 
for each $t>0$ and $x\in M$. This fact implies that the measure $\mu_{\mathbf{x}_{0}}$ is of probability, that is, $\mu_{\mathbf{x}_{0}}(M^{[0,1]})=1$. 
\par On the other hand, by \cite[Cr. 2.19]{B3}, the measure $\mu_{\mathbf{x}_{0}}$ satisfies the identity $\mu_{\mathbf{x}_{0}}(\mathcal{C}^{\alpha}_{\mathbf{x}_{0}}(M))=1 \text{ for each } \alpha\in(0,1/2)$ where $\mathcal{C}^{\alpha}_{\mathbf{x}_{0}}(M)$ stands for the subset of $M^{[0,1]}$ consisting of H\"older continuous paths $\gamma:[0,1]\to M$ of exponent $\alpha\in(0,1)$ satisfying $\gamma(0)=\mathbf{x}_{0}$. Therefore, since $\mathcal{C}_{\mathbf{x}_{0}}(M)$ is a Borel subset of $M^{[0,1]}$ (see \cite[Th. 10.28]{F}) containing $\mathcal{C}^{\alpha}_{\mathbf{x}_{0}}(M)$ and since
\begin{equation*}
\mathcal{B} \cap \mathcal{C}_{\mathbf{x}_{0}}(M):=\{B\cap \mathcal{C}_{\mathbf{x}_{0}}(M):B\in \mathcal{B}\},
\end{equation*}
coincides with the Borel $\sigma$-algebra $\mathcal{B}_{\mathbf{x}_{0}}$ (see \cite[Prop. 2.2]{I}), we can consider the restricted probability space $(\mathcal{C}_{\mathbf{x}_{0}}(M),\mathcal{B}_{\mathbf{x}_{0}},\mu_{\mathbf{x}_{0}})$. The restricted measure $\mu_{\mathbf{x}_{0}}$ is called the Wiener measure of $M$ with base point $\mathbf{x}_{0}$. The proof of these facts can be found for instance in \cite{B3,1.7} and references therein.

\section{Approximation Scheme for the Wiener Measure} 

In this section, we will prove that the spaces $L^{p}(\mathcal{C}_{\mathbf{x}_{0}}(M),\mu_{\mathbf{x}_{0}})$, $1\leq p<\infty$, are realizations of the colimit of the diagram $(L^{p}(M^{\mathcal{T}},\mu_{\mathbf{x}_{0}}^{\mathcal{T}}),\pi^{\ast}_{\mathcal{T}\mathcal{T}'})$. Let us start by defining the diagram $(L^{p}(M^{\mathcal{T}},\mu_{\mathbf{x}_{0}}^{\mathcal{T}}),\pi^{\ast}_{\mathcal{T}\mathcal{T}'})$. Consider the directed set $\mathcal{P}$ consisting on partitions of $[0,1]$ $$\mathcal{T}=\{0=t_{0}<t_{1}<\cdots<t_{n}\},$$ partially ordered by inclusion. For notational simplicity, we will use the notation $M^{\mathcal{T}}$ to denote 
\begin{equation*}
M^{\mathcal{T}}:=\bigtimes_{t\in\mathcal{T}\backslash\{0\}}M.
\end{equation*}
For a partition $\mathcal{T}=\{0=t_{0}<t_{1}<\cdots<t_{n}\}$ in $\mathcal{P}$, consider the probability space $(M^{\mathcal{T}},\bigotimes_{i=1}^{n}\mathcal{B}_{M},\mu_{\mathbf{x}_{0}}^{\mathcal{T}})$ where $\mu_{\mathbf{x}_{0}}^{\mathcal{T}}$ is the measure defined by the density
\begin{equation*}
d\mu_{\mathbf{x}_{0}}^{\mathcal{T}}=\prod_{i=1}^{n}p_{t_{i}-t_{i-1}}(x_{t_{i}},x_{t_{i-1}})\prod_{i=1}^{n}d\mu(x_{t_{i}}), \quad x_{0}=\mathbf{x}_{0},
\end{equation*}
and $\bigotimes_{i=1}^{n}\mathcal{B}_{M}$ denotes the product $\sigma$-algebra. For each pair of partitions $\mathcal{T},\mathcal{T}'$ with $\mathcal{T}\subset \mathcal{T}'$, we obtain a measurable projection map 
\begin{equation*}
\pi_{\mathcal{T}\mathcal{T}'}:M^{\mathcal{T}'}\longrightarrow M^{\mathcal{T}}, \quad \pi_{\mathcal{T}\mathcal{T}'}(x_{t})_{t\in\mathcal{T}'\backslash\{0\}}:=(x_{t})_{t\in\mathcal{T}\backslash\{0\}}.
\end{equation*}
We therefore obtain a diagram in the category of measure spaces, indexed by the directed set $\mathcal{P}$. For each pair of partitions $\mathcal{T},\mathcal{T}'$ with $\mathcal{T}\subset \mathcal{T}'$, we can construct the isometry
\begin{equation*}
\pi^{\ast}_{\mathcal{T}\mathcal{T}'}: L^{p}(M^{\mathcal{T}},\mu^{\mathcal{T}}_{\mathbf{x}_{0}})\longrightarrow L^{p}(M^{\mathcal{T}'},\mu^{\mathcal{T}'}_{\mathbf{x}_{0}}), \quad \pi^{\ast}_{\mathcal{T}\mathcal{T}'}(f):=f\circ\pi_{\mathcal{T}\mathcal{T}'}.
\end{equation*}
Applying this construction to the entire diagram, we obtain a diagram in $\mathfrak{Ban}$ indexed by the directed set $\mathcal{P}$, where all arrow directions are now swapped, as the $L^{p}$-construction is contravariant. This diagram forms a diagram in $\mathfrak{Ban}$ that will be subsequently denoted by $(L^{p}(M^{\mathcal{T}},\mu^{\mathcal{T}}_{\mathbf{x}_{0}}),\pi^{\ast}_{\mathcal{T}\mathcal{T}'})$. 
\par It will be proved that the cocone $(L^{p}(\mathcal{C}_{\mathbf{x}_{0}}(M),\mu_{\mathbf{x}_{0}}),\phi_{\mathcal{T}})$, where $\phi_{\mathcal{T}}$ are the morphisms defined by
\begin{equation*}
\phi_{\mathcal{T}}:L^{p}(M^{\mathcal{T}},\nu^{\mathcal{T}}_{\mathbf{x}_{0}})\longrightarrow L^{p}(\mathcal{C}_{\mathbf{x}_{0}}(M),\mu_{\mathbf{x}_{0}}), \quad \phi_{\mathcal{T}}(f):=f\circ\pi_{\mathcal{T}}|_{\mathcal{C}_{\mathbf{x}_{0}}(M)},
\end{equation*}
defines a realization of the colimit of $(L^{p}(M^{\mathcal{T}},\mu^{\mathcal{T}}_{\mathbf{x}_{0}}),\pi^{\ast}_{\mathcal{T}\mathcal{T}'})$.
The following Lemma will be useful for this purpose.

\begin{lemma}\label{L4.1}
Let $1\leq p < \infty$, then the subspace $\bigcup_{\mathcal{T}\in\mathcal{P}}\phi_{\mathcal{T}}(L^{p}(M^{\mathcal{T}},\mu_{\mathbf{x}_{0}}^{\mathcal{T}}))$ is dense in $L^{p}(\mathcal{C}_{\mathbf{x}_{0}}(M),\mu_{\mathbf{x}_{0}})$.
\end{lemma}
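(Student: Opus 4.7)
The plan is to establish density by approximating indicator functions of arbitrary Borel sets in $\mathcal{C}_{\mathbf{x}_{0}}(M)$ by indicator functions of cylinder sets, and then extending by linearity and by density of simple functions in $L^{p}$. Since each $\phi_{\mathcal{T}}$ is an isometry (which is a direct consequence of \eqref{E9}) carrying indicators $\chi_{B}$ to indicators of cylinder sets, and since $\mu_{\mathbf{x}_{0}}$ is a finite measure, this reduces the problem to showing that cylinder sets generate the Borel $\sigma$-algebra of $\mathcal{C}_{\mathbf{x}_{0}}(M)$.

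First I would introduce the cylinder algebra
\begin{equation*}
\mathcal{A} := \bigcup_{\mathcal{T}\in\mathcal{P}} \pi_{\mathcal{T}}^{-1}\Bigl(\bigotimes_{t\in\mathcal{T}\setminus\{0\}}\mathcal{B}_{M}\Bigr)\cap \mathcal{C}_{\mathbf{x}_{0}}(M).
\end{equation*}
That $\mathcal{A}$ is an algebra follows because $\mathcal{P}$ is directed under inclusion: given two cylinder sets over partitions $\mathcal{T}_{1},\mathcal{T}_{2}$, both may be lifted to cylinder sets over the common refinement $\mathcal{T}_{1}\cup\mathcal{T}_{2}$, so complements, finite unions and finite intersections of elements of $\mathcal{A}$ remain in $\mathcal{A}$.

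Next I would show $\sigma(\mathcal{A})=\mathcal{B}_{\mathbf{x}_{0}}$. The inclusion $\sigma(\mathcal{A})\subset \mathcal{B}_{\mathbf{x}_{0}}$ is clear because each $\pi_{\mathcal{T}}$ is continuous on $\mathcal{C}_{\mathbf{x}_{0}}(M)$ for the uniform convergence topology. For the reverse inclusion, using that $\mathcal{C}_{\mathbf{x}_{0}}(M)$ is separable it suffices to realize every open ball $B(\gamma_{0},r)$ in the uniform metric as an element of $\sigma(\mathcal{A})$; since elements of $\mathcal{C}_{\mathbf{x}_{0}}(M)$ are continuous on a compact interval, the sup over $[0,1]$ coincides with the sup over $\mathbb{Q}\cap[0,1]$, and
\begin{equation*}
B(\gamma_{0},r)=\bigcup_{n\geq 1}\bigcap_{t\in \mathbb{Q}\cap[0,1]}\bigl\{\eta\in\mathcal{C}_{\mathbf{x}_{0}}(M): d_{M}(\gamma_{0}(t),\eta(t))\leq r-1/n\bigr\}.
\end{equation*}
Each set on the right is the preimage under $\pi_{\{0,t\}}$ of a closed ball in $M$, hence belongs to $\mathcal{A}$.

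Once this is known, the standard Carath\'eodory-type approximation theorem supplies, for each $A\in\mathcal{B}_{\mathbf{x}_{0}}$ and $\varepsilon>0$, a partition $\mathcal{T}\in\mathcal{P}$ and a Borel set $B\subset M^{\mathcal{T}}$ with $\mu_{\mathbf{x}_{0}}(A\triangle (\pi_{\mathcal{T}}^{-1}(B)\cap\mathcal{C}_{\mathbf{x}_{0}}(M)))<\varepsilon$. Because $\chi_{B}\in L^{p}(M^{\mathcal{T}},\mu_{\mathbf{x}_{0}}^{\mathcal{T}})$ (finite measure space) and $\phi_{\mathcal{T}}(\chi_{B})=\chi_{\pi_{\mathcal{T}}^{-1}(B)\cap\mathcal{C}_{\mathbf{x}_{0}}(M)}$, we obtain $\|\chi_{A}-\phi_{\mathcal{T}}(\chi_{B})\|_{p}^{p}<\varepsilon$. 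Linearity extends the approximation to all simple Borel functions on $\mathcal{C}_{\mathbf{x}_{0}}(M)$, and since simple functions are dense in $L^{p}(\mathcal{C}_{\mathbf{x}_{0}}(M),\mu_{\mathbf{x}_{0}})$, the claim follows. The main obstacle I anticipate is the second step, verifying that the Borel $\sigma$-algebra for the uniform topology (rather than the coarser product topology) is still generated by the cylinder sets; this is the only place where the specific nature of $\mathcal{C}_{\mathbf{x}_{0}}(M)$, namely separability together with pointwise continuity of its elements, is used essentially.
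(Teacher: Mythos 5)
Your proposal is correct and takes essentially the same route as the paper: the paper likewise reduces the lemma to the fact that the cylinder sets generate $\mathcal{B}_{\mathbf{x}_{0}}$ (via the cited trace identity $\mathcal{B}_{\mathbf{x}_{0}}=\mathcal{B}\cap\mathcal{C}_{\mathbf{x}_{0}}(M)$) and then invokes a standard result giving density in $L^{p}$ of the span of indicators of a generating family for a finite measure, noting that these indicators lie in $\bigcup_{\mathcal{T}\in\mathcal{P}}\phi_{\mathcal{T}}(L^{p}(M^{\mathcal{T}},\mu_{\mathbf{x}_{0}}^{\mathcal{T}}))$. The only difference is that you prove these two ingredients directly (the rational-time ball decomposition together with separability of $\mathcal{C}_{\mathbf{x}_{0}}(M)$, and the algebra approximation plus density of simple functions) where the paper cites references.
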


\begin{proof}
	 As we have recalled in section 2.2, the following equality of $\sigma$-algebras
	 \begin{equation}\label{E17}
	 \mathcal{B}_{\mathbf{x}_{0}}=\mathcal{B}\cap\mathcal{C}_{\mathbf{x}_{0}}(M),
	 \end{equation}
	 holds. By definition, the product $\sigma$-algebra $\mathcal{B}$ is generated by the cylinder sets and therefore by \eqref{E17}, it follows that
	 \begin{equation*}
	 \mathcal{B}_{\mathbf{x}_{0}}=\sigma(\mathcal{R}), \quad \mathcal{R}:=\{\pi^{-1}_{\mathcal{T}}(B_{t})_{t\in\mathcal{T}}:(B_{t})_{t\in\mathcal{T}}\subset\mathcal{B}_{M}, \mathcal{T}\subset (0,1] \text{ finite}\}\cap \mathcal{C}_{\mathbf{x}_{0}}(M).
	 \end{equation*}
	 Since $(\mathcal{C}_{\mathbf{x}_{0}}(M),\mu_{\mathbf{x}_{0}})$ has finite measure, by \cite[Lem. 3.4.6]{C}, the subspace $\text{Span}\left\{\chi_{R}:R\in\mathcal{R}\right\}$, where $\chi_{R}$ denoted the characteristic function of $R$, is dense in $L^{p}\left(\mathcal{C}_{\mathbf{x}_{0}}(M),\mu_{\mathbf{x}_{0}}\right)$. From the inclusion
	\begin{equation*}
	\left\{\chi_{R}:R\in\mathcal{R}\right\}\subset\bigcup_{\mathcal{T}\in\mathcal{P}}\phi_{\mathcal{T}}(L^{p}(M^{\mathcal{T}},\mu_{\mathbf{x}_{0}}^{\mathcal{T}})),
	\end{equation*}
    it follows that $\bigcup_{\mathcal{T}\in\mathcal{P}}\phi_{\mathcal{T}}(L^{p}(M^{\mathcal{T}},\mu_{\mathbf{x}_{0}}^{\mathcal{T}}))$ is dense in $L^{p}\left(\mathcal{C}_{\mathbf{x}_{0}}(M),\mu_{\mathbf{x}_{0}}\right)$. This
	concludes the proof.
\end{proof}

Finally, we prove the main result of this section.

\begin{theorem}\label{th4.7}
	Let $1\leq p < \infty$, then the cocone $(L^{p}(\mathcal{C}_{\mathbf{x}_{0}}(M),\mu_{\mathbf{x}_{0}}),\phi_{\mathcal{T}})$ defines a realization of the colimit of $(L^{p}(M^{\mathcal{T}},\mu^{\mathcal{T}}_{\mathbf{x}_{0}}),\pi^{\ast}_{\mathcal{T}\mathcal{T}'})$.
\end{theorem}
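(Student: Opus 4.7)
The plan is to reduce the theorem to an application of Lemma \ref{L2.1} together with Lemma \ref{L4.1}. Since Lemma \ref{L2.1} characterizes colimits in $\mathfrak{Ban}$ purely through the density of $\bigcup_{\mathcal{T}\in\mathcal{P}}\phi_{\mathcal{T}}(L^{p}(M^{\mathcal{T}},\mu^{\mathcal{T}}_{\mathbf{x}_{0}}))$ in $L^{p}(\mathcal{C}_{\mathbf{x}_{0}}(M),\mu_{\mathbf{x}_{0}})$, and since Lemma \ref{L4.1} is precisely this density statement, the only remaining content is to verify that $(L^{p}(\mathcal{C}_{\mathbf{x}_{0}}(M),\mu_{\mathbf{x}_{0}}),\phi_{\mathcal{T}})$ really is a cocone in $\mathfrak{Ban}$ over the given diagram.

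First, I would check that each $\phi_{\mathcal{T}}$ is a well-defined linear isometry. The key observation is that the defining identity \eqref{E9} of the Wiener measure says exactly that the pushforward of $\mu_{\mathbf{x}_{0}}$ under $\pi_{\mathcal{T}}|_{\mathcal{C}_{\mathbf{x}_{0}}(M)}$ equals $\mu^{\mathcal{T}}_{\mathbf{x}_{0}}$ (both measures agree on the $\pi$-system of boxes $\prod_{t}B_{t}$ and are finite, so they coincide by the uniqueness part of Carath\'eodory's extension theorem). Applying the change of variables formula, for every $f\in L^{p}(M^{\mathcal{T}},\mu^{\mathcal{T}}_{\mathbf{x}_{0}})$,
\begin{equation*}
\int_{\mathcal{C}_{\mathbf{x}_{0}}(M)}|f\circ\pi_{\mathcal{T}}|^{p}\,d\mu_{\mathbf{x}_{0}}=\int_{M^{\mathcal{T}}}|f|^{p}\,d\mu^{\mathcal{T}}_{\mathbf{x}_{0}},
\end{equation*}
so $\phi_{\mathcal{T}}$ preserves the $L^{p}$-norm and is in particular well-defined on equivalence classes.

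Next, I would verify the cocone compatibility. For $\mathcal{T}\subset\mathcal{T}'$ one has the identity $\pi_{\mathcal{T}\mathcal{T}'}\circ\pi_{\mathcal{T}'}=\pi_{\mathcal{T}}$ of maps $M^{[0,1]}\to M^{\mathcal{T}}$, which immediately gives
\begin{equation*}
\phi_{\mathcal{T}'}\circ\pi^{\ast}_{\mathcal{T}\mathcal{T}'}(f)=(f\circ\pi_{\mathcal{T}\mathcal{T}'})\circ\pi_{\mathcal{T}'}|_{\mathcal{C}_{\mathbf{x}_{0}}(M)}=f\circ\pi_{\mathcal{T}}|_{\mathcal{C}_{\mathbf{x}_{0}}(M)}=\phi_{\mathcal{T}}(f),
\end{equation*}
so the family $\{\phi_{\mathcal{T}}\}_{\mathcal{T}\in\mathcal{P}}$ is indeed a cocone over $(L^{p}(M^{\mathcal{T}},\mu^{\mathcal{T}}_{\mathbf{x}_{0}}),\pi^{\ast}_{\mathcal{T}\mathcal{T}'})$.

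Finally, Lemma \ref{L4.1} supplies density of the union of the images, so Lemma \ref{L2.1} concludes that this cocone is a realization of the colimit, finishing the proof. There is no real obstacle: the main substance has been absorbed into Lemma \ref{L4.1} (which uses the cylinder-set generation of $\mathcal{B}_{\mathbf{x}_{0}}$ and the density of simple functions), and into Lemma \ref{L2.1} (which is the general abstract criterion). The only step that needs a modest amount of care is the identification of the pushforward measure $(\pi_{\mathcal{T}}|_{\mathcal{C}_{\mathbf{x}_{0}}(M)})_{\ast}\mu_{\mathbf{x}_{0}}=\mu^{\mathcal{T}}_{\mathbf{x}_{0}}$, which guarantees the isometry property of $\phi_{\mathcal{T}}$.
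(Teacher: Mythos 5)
Your proposal is correct and follows essentially the same route as the paper: the paper's proof of Theorem \ref{th4.7} is exactly the combination of Lemma \ref{L4.1} (density of $\bigcup_{\mathcal{T}\in\mathcal{P}}\phi_{\mathcal{T}}(L^{p}(M^{\mathcal{T}},\mu^{\mathcal{T}}_{\mathbf{x}_{0}}))$) with the abstract criterion of Lemma \ref{L2.1}. The only difference is that you additionally spell out the verification, left implicit in the paper, that $(L^{p}(\mathcal{C}_{\mathbf{x}_{0}}(M),\mu_{\mathbf{x}_{0}}),\phi_{\mathcal{T}})$ is indeed a cocone of isometries via the pushforward identity $(\pi_{\mathcal{T}}|_{\mathcal{C}_{\mathbf{x}_{0}}(M)})_{\ast}\mu_{\mathbf{x}_{0}}=\mu^{\mathcal{T}}_{\mathbf{x}_{0}}$ and the relation $\pi_{\mathcal{T}\mathcal{T}'}\circ\pi_{\mathcal{T}'}=\pi_{\mathcal{T}}$, which is a welcome but compatible addition.
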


\begin{proof}
	By Lemma \ref{L4.1}, the subspace 
	\begin{equation*}
	\bigcup_{\mathcal{T}\in\mathcal{P}}\phi_{\mathcal{T}}(L^{p}(M^{\mathcal{T}},\mu_{\mathbf{x}_{0}}^{\mathcal{T}}))\subset L^{p}(\mathcal{C}_{\mathbf{x}_{0}}(M),\mu_{\mathbf{x}_{0}})
	\end{equation*}
	is dense in $L^{p}(\mathcal{C}_{\mathbf{x}_{0}}(M),\mu_{\mathbf{x}_{0}})$. Therefore, we can apply Lemma \ref{L2.1} to $(L^{p}(\mathcal{C}_{\mathbf{x}_{0}}(M),\mu_{\mathbf{x}_{0}}),\phi_{\mathcal{T}})$ obtaining the result.
\end{proof}

\section{Derivation of the Limit Formula}


In this section, we will prove that the integral of a given integrable functional can be expressed as the limit of finite dimensional integrals. To relate the colimit structure obtained in the last section with the integration procedure, we have to provide a particular realization of the colimit of the directed system $(L^{p}(M^{\mathcal{T}},\mu^{\mathcal{T}}_{\mathbf{x}_{0}}),\pi^{\ast}_{\mathcal{T}\mathcal{T}'})$. To define this realization, we need the following definition.
\par An element $(f_{\mathcal{T}})_{\mathcal{T}\in\mathcal{P}}\in\bigtimes_{\mathcal{T}\in\mathcal{P}}L^{p}(M^{\mathcal{T}},\mu^{\mathcal{T}}_{\mathbf{x}_{0}})$ is said to be co-Cauchy if for each $\varepsilon>0$, there exists $\mathcal{R}\in\mathcal{P}$ such that 
$$\|\pi^{\ast}_{\mathcal{T}\mathcal{T}'}(f_{\mathcal{T}})-f_{\mathcal{T'}}\|_{L^{p}_{\mathcal{T}}}<\varepsilon, \text{ for all } \mathcal{T},\mathcal{T}'\in\mathcal{P} \text{ with } \mathcal{R}\subset\mathcal{T}\subset\mathcal{T}'.$$
\noindent For notational simplicity we denote $L^{p}\equiv L^{p}(\mathcal{C}_{\mathbf{x}_{0}}(M),\mu_{\mathbf{x}_{0}})$ and $L^{p}_{\mathcal{T}}\equiv L^{p}(M^{\mathcal{T}},\mu_{\mathbf{x}_{0}}^{\mathcal{T}})$.

\noindent We define the space $\mathfrak{L}( L^{p}(M^{\mathcal{T}},\mu^{\mathcal{T}}_{\mathbf{x}_{0}}))$ by
\begin{equation}
\label{E20}
\mathfrak{L}( L^{p}(M^{\mathcal{T}},\mu^{\mathcal{T}}_{\mathbf{x}_{0}})):=\Big\{(f_{\mathcal{T}})_{\mathcal{T}\in\mathcal{P}}\in\bigtimes_{\mathcal{T}\in\mathcal{P}}L^{p}(M^{\mathcal{T}},\mu^{\mathcal{T}}_{\mathbf{x}_{0}}) : (f_{\mathcal{T}})_{\mathcal{T}\in\mathcal{P}} \text{ is co-Cauchy}\Big\}\Big / \sim,
\end{equation}
where we relate $(f_{\mathcal{T}})_{\mathcal{T}\in\mathcal{P}}\sim (g_{\mathcal{T}})_{\mathcal{T}\in\mathcal{P}}$ if $$\lim_{\mathcal{T}}\|f_{\mathcal{T}}-g_{\mathcal{T}}\|_{L^{p}_{\mathcal{T}}}=0.$$ We define in $\mathfrak{L}(L^{p}(M^{\mathcal{T}},\mu^{\mathcal{T}}_{\mathbf{x}_{0}}))$ the norm $$\|(f_{\mathcal{T}})_{\mathcal{T}\in\mathcal{P}}\|_{\mathfrak{L}}:=\lim_{\mathcal{T}}\|f_{\mathcal{T}}\|_{L^{p}_{\mathcal{T}}}.$$ 
All the limits involved are considered as limits of nets. For further considerations, it is imperative to obtain the completeness of the space $\mathfrak{L}( L^{p}(M^{\mathcal{T}},\mu^{\mathcal{T}}_{\mathbf{x}_{0}}))$, which is not obvious from the definition. For this reason, we identify it directly with $L^{p}(\mathcal{C}_{\mathbf{x}_{0}}(M),\mu_{\mathbf{x}_{0}})$ via the following result.

\begin{theorem}
	\label{T1}
	The following spaces are isometrically isomorphic for $1\leq p<\infty$
	\begin{equation*}
	\mathfrak{L}( L^{p}(M^{\mathcal{T}},\mu^{\mathcal{T}}_{\mathbf{x}_{0}}))\simeq L^{p}(\mathcal{C}_{\mathbf{x}_{0}}(M),\mu_{\mathbf{x}_{0}}).
	\end{equation*}
\end{theorem}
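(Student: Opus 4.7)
The plan is to construct an explicit isometric isomorphism
\[
\Phi\colon \mathfrak{L}(L^{p}(M^{\mathcal{T}},\mu^{\mathcal{T}}_{\mathbf{x}_{0}}))\longrightarrow L^{p}(\mathcal{C}_{\mathbf{x}_{0}}(M),\mu_{\mathbf{x}_{0}})
\]
by sending the class of a co-Cauchy family $(f_{\mathcal{T}})_{\mathcal{T}\in\mathcal{P}}$ to the $L^{p}$-norm limit $\lim_{\mathcal{T}}\phi_{\mathcal{T}}(f_{\mathcal{T}})$. Two features of the cocone $(L^{p},\phi_{\mathcal{T}})$ from Theorem \ref{th4.7} drive the argument: the cocone identity $\phi_{\mathcal{T}'}\circ \pi^{\ast}_{\mathcal{T}\mathcal{T}'} = \phi_{\mathcal{T}}$ for $\mathcal{T}\subset\mathcal{T}'$, and the fact that each $\phi_{\mathcal{T}}$ is itself an isometry (since $\pi_{\mathcal{T}}|_{\mathcal{C}_{\mathbf{x}_{0}}(M)}$ pushes $\mu_{\mathbf{x}_{0}}$ forward to $\mu^{\mathcal{T}}_{\mathbf{x}_{0}}$ by \eqref{E9}).

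Combining these two properties yields the key identity
\[
\|\phi_{\mathcal{T}}(f_{\mathcal{T}}) - \phi_{\mathcal{T}'}(f_{\mathcal{T}'})\|_{L^{p}} = \|\pi^{\ast}_{\mathcal{T}\mathcal{T}'}(f_{\mathcal{T}}) - f_{\mathcal{T}'}\|_{L^{p}_{\mathcal{T}'}}, \qquad \mathcal{T}\subset\mathcal{T}'.
\]
From this: the net $(\phi_{\mathcal{T}}(f_{\mathcal{T}}))$ is Cauchy in the Banach space $L^{p}$ by the co-Cauchy hypothesis, so the defining limit exists and plainly descends to the equivalence relation; linearity is immediate; and
\[
\|\Phi([(f_{\mathcal{T}})])\|_{L^{p}}=\lim_{\mathcal{T}}\|\phi_{\mathcal{T}}(f_{\mathcal{T}})\|_{L^{p}}=\lim_{\mathcal{T}}\|f_{\mathcal{T}}\|_{L^{p}_{\mathcal{T}}}=\|[(f_{\mathcal{T}})]\|_{\mathfrak{L}},
\]
so $\Phi$ is an isometry (and, as a byproduct, one sees that the limit defining $\|\cdot\|_{\mathfrak{L}}$ does exist and is finite).

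The crux is surjectivity. Given $g\in L^{p}(\mathcal{C}_{\mathbf{x}_{0}}(M),\mu_{\mathbf{x}_{0}})$, I would construct a canonical preimage via conditional expectation against the filtration $\mathcal{F}_{\mathcal{T}}:=\sigma(\pi_{\mathcal{T}}|_{\mathcal{C}_{\mathbf{x}_{0}}(M)})$. Because $\pi_{\mathcal{T}}|_{\mathcal{C}_{\mathbf{x}_{0}}(M)}$ is a measure-preserving surjection onto $(M^{\mathcal{T}},\mu^{\mathcal{T}}_{\mathbf{x}_{0}})$, there is a unique $f_{\mathcal{T}}\in L^{p}_{\mathcal{T}}$ with $\phi_{\mathcal{T}}(f_{\mathcal{T}})=\E[g\mid\mathcal{F}_{\mathcal{T}}]$, and $L^{p}$-contractivity of conditional expectation gives $\|f_{\mathcal{T}}\|_{L^{p}_{\mathcal{T}}}\leq\|g\|_{L^{p}}$. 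Since $\bigcup_{\mathcal{T}\in\mathcal{P}}\mathcal{F}_{\mathcal{T}}$ generates $\mathcal{B}_{\mathbf{x}_{0}}$---the content already extracted inside the proof of Lemma \ref{L4.1}---an $L^{p}$-martingale convergence argument along the directed set $\mathcal{P}$ gives $\phi_{\mathcal{T}}(f_{\mathcal{T}})\to g$ in $L^{p}$, and reinserting this into the displayed identity shows $(f_{\mathcal{T}})$ is co-Cauchy. By construction $\Phi([(f_{\mathcal{T}})])=g$.

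The main obstacle is this last net-indexed martingale convergence, since $\mathcal{P}$ is a directed set rather than a sequence and the classical $L^{p}$-martingale convergence theorem is usually stated for filtrations indexed by $\mathbb{N}$. I would bypass the general machinery with a direct $\varepsilon/3$-argument: the operators $g\mapsto \E[g\mid\mathcal{F}_{\mathcal{T}}]$ are norm-one contractions on $L^{p}$ that act as the identity on $\phi_{\mathcal{R}}(L^{p}_{\mathcal{R}})$ whenever $\mathcal{T}\supset\mathcal{R}$; combined with the density furnished by Lemma \ref{L4.1}, this upgrades fixing-on-a-dense-subspace to net convergence on all of $L^{p}$ in the standard way.
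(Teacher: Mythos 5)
Your proposal is correct, and the first half coincides with the paper's own argument: the map you call $\Phi$ is exactly the paper's $\mathfrak{I}_{p}$, and the well-definedness and isometry steps (via $\|\phi_{\mathcal{T}}(f_{\mathcal{T}})-\phi_{\mathcal{T}'}(f_{\mathcal{T}'})\|_{L^{p}}=\|\pi^{\ast}_{\mathcal{T}\mathcal{T}'}(f_{\mathcal{T}})-f_{\mathcal{T}'}\|_{L^{p}_{\mathcal{T}'}}$ and the co-Cauchy hypothesis) are the same. Where you genuinely diverge is surjectivity. The paper fixes a countable dense set of times, builds the nested partitions $\mathcal{P}_{n}$, extracts an approximating sequence from the density statement, and then pads and pulls back (the auxiliary sequences $N_{n}$, $M_{n}$, the functions $g_{n}$, $F_{n}$, $F_{\mathcal{T}}$) precisely to manufacture an element of the full Cartesian product $\bigtimes_{\mathcal{T}\in\mathcal{P}}L^{p}_{\mathcal{T}}$ --- the pitfall flagged in the remark following Theorem \ref{T1}, namely that a family obtained merely from Lemma \ref{L2.1} need not be defined at every index. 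Your conditional-expectation construction sidesteps that pitfall structurally: setting $\phi_{\mathcal{T}}(f_{\mathcal{T}})=\E[g\mid\mathcal{F}_{\mathcal{T}}]$ with $\mathcal{F}_{\mathcal{T}}=\sigma(\pi_{\mathcal{T}}|_{\mathcal{C}_{\mathbf{x}_{0}}(M)})$ produces a canonical $f_{\mathcal{T}}$ for \emph{every} $\mathcal{T}$ (existence by Doob--Dynkin factorization together with $(\pi_{\mathcal{T}})_{\ast}\mu_{\mathbf{x}_{0}}=\mu^{\mathcal{T}}_{\mathbf{x}_{0}}$, which is also what makes $\phi_{\mathcal{T}}$ isometric and hence injective, giving uniqueness), and your contraction-plus-density $\varepsilon$-argument correctly replaces any net-indexed martingale convergence theorem: for $\mathcal{T}\supset\mathcal{R}$ the operator $\E[\,\cdot\mid\mathcal{F}_{\mathcal{T}}]$ fixes $\phi_{\mathcal{R}}(L^{p}_{\mathcal{R}})$ and is an $L^{p}$-contraction (conditional Jensen, $1\leq p<\infty$, and $g\in L^{1}$ since $\mu_{\mathbf{x}_{0}}$ is a probability), so Lemma \ref{L4.1} yields $\E[g\mid\mathcal{F}_{\mathcal{T}}]\to g$ in $L^{p}$ along the net, and the displayed identity then gives the co-Cauchy property. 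The trade-off: your route is shorter, canonical, and makes the Cartesian-product issue disappear by construction, at the cost of invoking conditional expectation and the factorization lemma; the paper's route is more hands-on and uses nothing beyond the density lemma, but pays with the bookkeeping of the explicit padded net.
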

\begin{proof}
	Let us prove that the map
	\begin{equation*}
	\mathfrak{I}_{p}:\mathfrak{L}( L^{p}(M^{\mathcal{T}},\mu^{\mathcal{T}}_{\mathbf{x}_{0}}))\longrightarrow L^{p}(\mathcal{C}_{\mathbf{x}_{0}}(M),\mu_{\mathbf{x}_{0}}), \quad (f_{\mathcal{T}})_{\mathcal{T}\in\mathcal{P}}\mapsto \lim_{\mathcal{T}}\phi_{\mathcal{T}}(f_{\mathcal{T}}).
	\end{equation*}
	defines an isometric isomorphism. The map $\mathfrak{I}_{p}$ is well defined since the net $(\phi_{\mathcal{T}}(f_{\mathcal{T}}))_{\mathcal{T}\in\mathcal{P}}$ converges in $L^{p}(\mathcal{C}_{\mathbf{x}_{0}}(M),\mu_{\mathbf{x}_{0}})$, a fact that follows from the identity
	\begin{align*}
	\|\phi_{\mathcal{T}}(f_{\mathcal{T}})-\phi_{\mathcal{T}'}(f_{\mathcal{T}'})\|_{L^{p}}&=\|(\phi_{\mathcal{T}'}\circ \pi^{\ast}_{\mathcal{T}\mathcal{T}'})(f_{\mathcal{T}})-\phi_{\mathcal{T}'}(f_{\mathcal{T}'})\|_{L^{p}}\\
	&=\|\pi^{\ast}_{\mathcal{T}\mathcal{T}'}(f_{\mathcal{T}})-f_{\mathcal{T}'}\|_{L^{p}_{\mathcal{T}'}}, \quad \mathcal{T}\subset \mathcal{T}'
	\end{align*}
	and the co-Cauchy property. It is easily seen that $\mathfrak{I}_{p}$ is an isometry since
	\begin{equation*}
	\|\mathfrak{I}_{p}(f_{\mathcal{T}})_{\mathcal{T}\in\mathcal{P}}\|_{L^{p}}=\|\lim_{\mathcal{T}} \phi_{\mathcal{T}}(f_{\mathcal{T}})\|_{L^{p}}=\lim_{\mathcal{T}}\|f_{\mathcal{T}}\|_{L^{p}_{\mathcal{T}}}=\|(f_{\mathcal{T}})_{\mathcal{T}\in\mathcal{P}}\|_{\mathfrak{L}}.
	\end{equation*}
	Finally, we prove that $\mathfrak{I}_{p}$ is onto. Let $f\in L^{p}(\mathcal{C}_{\mathbf{x}_{0}}(M),\mu_{\mathbf{x}_{0}})$ and take $\mathcal{D}=\{t_{i}\}_{i\in\mathbb{N}\cup\{0\}}$, $t_{0}=0$, a dense countable subset of $[0,1]$. Define the  sequence of partitions $\mathcal{P}=\{\mathcal{P}_{n}\}_{n\in\mathbb{N}}$ by 
	\begin{equation*}
	\mathcal{P}_{n}:=\{t_{i}\}_{i=0}^{n}, \quad n\in\mathbb{N}.
	\end{equation*}
	Clearly $\mathcal{P}_{n}\subset \mathcal{P}_{n+1}$, $n\in\mathbb{N}$ and $\bigcup_{n\in\mathbb{N}}\mathcal{P}_{n}=\mathcal{D}$. It is easily seen using the same techniques of the proof of Lemma \ref{L4.1}, that $\bigcup_{n\in\mathbb{N}}\phi_{\mathcal{P}_{n}}(L^{p}(M^{\mathcal{P}_{n}},\mu_{\mathbf{x}_{0}}^{\mathcal{P}_{n}}))$ is dense in $L^{p}(\mathcal{C}_{\mathbf{x}_{0}}(M),\mu_{\mathbf{x}_{0}})$. Hence there exists a sequence $$(f_{n})_{n\in\mathbb{N}}, \quad f_{n}\in L^{p}(M^{\mathcal{P}_{N_{n}}},\mu_{\mathbf{x}_{0}}^{\mathcal{P}_{N_{n}}}) \text{ for some } N_{n}\in\mathbb{N},$$ such that 
	\begin{equation*}
	\lim_{n\to \infty} \phi_{\mathcal{P}_{N_{n}}}(f_{n})=f \text{ in } L^{p}(\mathcal{C}_{\mathbf{x}_{0}}(M),\mu_{\mathbf{x}_{0}}).
	\end{equation*}
	Take an strictly increasing sequence $(M_{n})_{n\in\mathbb{N}}\subset \mathbb{N}$ such that $N_{n}\leq M_{n}, n\in\mathbb{N}$. Then, in particular, $\mathcal{P}_{N_{n}}\subset\mathcal{P}_{M_{n}}$ and $$g_{n}:=\pi^{\ast}_{\mathcal{P}_{N_{n}}\mathcal{P}_{M_{n}}}(f_{n})\in L^{p}(M^{\mathcal{P}_{M_{n}}},\mu_{\mathbf{x}_{0}}^{\mathcal{P}_{M_{n}}}), \quad n\in\mathbb{N}.$$ Define the sequence 
	\begin{equation*}
	(F_{n})_{n\in\mathbb{N}}, \quad F_{n}:=\left\{
	\begin{array}{ll}
	\pi^{\ast}_{\mathcal{P}_{M_{m}}\mathcal{P}_{n}}(g_{n}) & \text{ if } M_{m}\leq n <M_{m+1} \\
	0 & \text{ if } n< M_{1}
	\end{array}
	\right.
	\end{equation*}
	then 
	\begin{equation*}
	(F_{n})_{n\in\mathbb{N}}\in\bigtimes_{n\in\mathbb{N}}L^{p}(M^{\mathcal{P}_{n}},\mu_{\mathbf{x}_{0}}^{\mathcal{P}_{n}}) \text{ and } \lim_{n\to \infty} \phi_{\mathcal{P}_{n}}(F_{n})=f \text{ in } L^{p}(\mathcal{C}_{\mathbf{x}_{0}}(M),\mu_{\mathbf{x}_{0}}).
	\end{equation*}
	Define the associated net $(F_{\mathcal{T}})_{\mathcal{T}\in\mathcal{P}}$ by
	\begin{equation*}
	F_{\mathcal{T}}:=\left\{
	\begin{array}{ll}
	\pi^{\ast}_{\mathcal{P}_{n}\mathcal{T}}(F_{n}) & \text{ if } \mathcal{P}_{n}\subset \mathcal{T} \text{ but } \mathcal{P}_{n+1}\not\subset \mathcal{T} \\
	0 & \text{ else }
	\end{array}
	\right.
	\end{equation*}
	Finally, we have that $(F_{\mathcal{T}})_{\mathcal{T}\in\mathcal{P}}\in \bigtimes_{\mathcal{T}\in\mathcal{P}}L^{p}(M^{\mathcal{T}},\mu_{\mathbf{x}_{0}}^{\mathcal{T}})$ and $\lim_{\mathcal{T}}\phi_{\mathcal{T}}(F_{\mathcal{T}})=f$, which implies that $\mathfrak{I}_{p}(F_{\mathcal{T}})_{\mathcal{T}\in\mathcal{P}}=f$. This concludes the proof.
\end{proof}
It is appropriate to note the following in accordance with the proof of Theorem \ref{T1}. One is tempted to think that in order to prove the surjectivity of $\mathfrak{I}_{p}$, for each $f\in L^{p}(\mathcal{C}_{\mathbf{x}_{0}},\mu_{\mathbf{x}_{0}})$, it is sufficient to use Lemma \ref{L2.1}, choosing a sequence $(f_{\mathcal{T}})_{\mathcal{T}}\in \bigcup_{\mathcal{T}\in\mathcal{P}}\phi_{\mathcal{T}}(L^{p}(M^{\mathcal{T}},\mu_{\mathbf{x}_{0}}^{\mathcal{T}}))$ such that $\lim_{\mathcal{T}}\phi_{\mathcal{T}}(f_{\mathcal{T}})=f$. Nonetheless, it must be observed that the sequence $(f_{\mathcal{T}})_{\mathcal{T}}$ is not necessarily contained in the Cartesian product $\bigtimes_{\mathcal{T}\in\mathcal{P}}L^{p}(M^{\mathcal{T}},\mu_{\mathbf{x}_{0}}^{\mathcal{T}})$ which makes things a bit more involved.
\par We define the morphisms $\psi_{\mathcal{R}}:L^{p}(M^{\mathcal{R}},\mu^{\mathcal{R}}_{\mathbf{x}_{0}})\to \mathfrak{L}( L^{p}(M^{\mathcal{T}},\mu^{\mathcal{T}}_{\mathbf{x}_{0}}))$, $\mathcal{R}\in\mathcal{P}$, through 
\begin{equation}\label{An}
\psi_{\mathcal{R}}(f_{\mathcal{R}})=(g_{\mathcal{T}})_{\mathcal{T}\in\mathcal{P}}, \quad
g_{\mathcal{T}}:=\left\{
\begin{array}{ll}
\pi^{\ast}_{\mathcal{R}\mathcal{T}}(f_{\mathcal{R}}) & \text{ if } \mathcal{R}\subset\mathcal{T} \\
0 & \text{ else } 
\end{array}
\right.
\end{equation}
By Theorem \ref{T1}, the space $\mathfrak{L}( L^{p}(M^{\mathcal{T}},\mu^{\mathcal{T}}_{\mathbf{x}_{0}}))$ is a Banach space and therefore the pair $(\mathfrak{L}( L^{p}(M^{\mathcal{T}},\mu^{\mathcal{T}}_{\mathbf{x}_{0}})),\psi_{\mathcal{R}})$ defines a cocone.
The colimit of $(L^{p}(M^{\mathcal{T}},\mu^{\mathcal{T}}_{\mathbf{x}_{0}}),\pi^{\ast}_{\mathcal{T}\mathcal{T}'})$ will be identify with $(\mathfrak{L}( L^{p}(M^{\mathcal{T}},\mu^{\mathcal{T}}_{\mathbf{x}_{0}})),\psi_{\mathcal{R}})$. 

\begin{theorem}
	\label{JR}
	The cocone $(\mathfrak{L}( L^{p}(M^{\mathcal{T}},\mu^{\mathcal{T}}_{\mathbf{x}_{0}})),\psi_{\mathcal{T}})$ defines another relization of the colimit of $(L^{p}(M^{\mathcal{T}},\mu^{\mathcal{T}}_{\mathbf{x}_{0}}),\pi^{\ast}_{\mathcal{T}\mathcal{T}'})$.
\end{theorem}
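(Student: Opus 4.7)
The plan is to deduce Theorem \ref{JR} as a straightforward transport of structure from Theorem \ref{th4.7} along the isometric isomorphism $\mathfrak{I}_{p}$ of Theorem \ref{T1}. Concretely, the whole argument hinges on the intertwining identity $\mathfrak{I}_{p}\circ\psi_{\mathcal{R}}=\phi_{\mathcal{R}}$ for every $\mathcal{R}\in\mathcal{P}$. Once this is established, the two candidate cocones $(L^{p}(\mathcal{C}_{\mathbf{x}_{0}}(M),\mu_{\mathbf{x}_{0}}),\phi_{\mathcal{T}})$ and $(\mathfrak{L}(L^{p}(M^{\mathcal{T}},\mu^{\mathcal{T}}_{\mathbf{x}_{0}})),\psi_{\mathcal{T}})$ are isomorphic in the category of cocones over $(L^{p}(M^{\mathcal{T}},\mu^{\mathcal{T}}_{\mathbf{x}_{0}}),\pi^{\ast}_{\mathcal{T}\mathcal{T}'})$; since Theorem \ref{th4.7} identifies the former as an initial object in that category, the latter is automatically initial as well.

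To verify the intertwining, fix $\mathcal{R}\in\mathcal{P}$ and $f_{\mathcal{R}}\in L^{p}_{\mathcal{R}}$, and unfold formula \eqref{An}. Along the cofinal subset $\{\mathcal{T}\in\mathcal{P}:\mathcal{R}\subset\mathcal{T}\}$, the factorization $\pi_{\mathcal{R}}|_{\mathcal{C}_{\mathbf{x}_{0}}(M)}=\pi_{\mathcal{R}\mathcal{T}}\circ\pi_{\mathcal{T}}|_{\mathcal{C}_{\mathbf{x}_{0}}(M)}$ gives
\begin{equation*}
\phi_{\mathcal{T}}(\pi^{\ast}_{\mathcal{R}\mathcal{T}}(f_{\mathcal{R}}))=(f_{\mathcal{R}}\circ\pi_{\mathcal{R}\mathcal{T}})\circ\pi_{\mathcal{T}}|_{\mathcal{C}_{\mathbf{x}_{0}}(M)}=\phi_{\mathcal{R}}(f_{\mathcal{R}}),
\end{equation*}
so the net $(\phi_{\mathcal{T}}(g_{\mathcal{T}}))_{\mathcal{T}}$ is eventually constant equal to $\phi_{\mathcal{R}}(f_{\mathcal{R}})$. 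Because the limit defining $\mathfrak{I}_{p}$ is a net limit over the directed set $\mathcal{P}$, the "else-zero" branch of \eqref{An} plays no role, and $\mathfrak{I}_{p}(\psi_{\mathcal{R}}(f_{\mathcal{R}}))=\phi_{\mathcal{R}}(f_{\mathcal{R}})$ follows.

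With the intertwining in hand, any competing cocone $(X,\rho_{\mathcal{T}})$ yields, via Theorem \ref{th4.7}, a unique morphism $\varphi_{X}:L^{p}(\mathcal{C}_{\mathbf{x}_{0}}(M),\mu_{\mathbf{x}_{0}})\to X$ with $\varphi_{X}\circ\phi_{\mathcal{T}}=\rho_{\mathcal{T}}$. Then $\varphi_{X}\circ\mathfrak{I}_{p}:\mathfrak{L}(L^{p}(M^{\mathcal{T}},\mu^{\mathcal{T}}_{\mathbf{x}_{0}}))\to X$ is the unique morphism factoring $\rho_{\mathcal{T}}$ through $\psi_{\mathcal{T}}$, with existence granted by the intertwining and uniqueness transferred along $\mathfrak{I}_{p}^{-1}$. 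As a shortcut, one may alternatively apply Lemma \ref{L2.1} directly: given $[(f_{\mathcal{T}})_{\mathcal{T}}]\in\mathfrak{L}$ and $\varepsilon>0$, the co-Cauchy condition produces $\mathcal{R}\in\mathcal{P}$ with $\|\pi^{\ast}_{\mathcal{R}\mathcal{T}'}(f_{\mathcal{R}})-f_{\mathcal{T}'}\|_{L^{p}_{\mathcal{T}'}}<\varepsilon$ for all $\mathcal{T}'\supset\mathcal{R}$, so that $\psi_{\mathcal{R}}(f_{\mathcal{R}})$ is $\varepsilon$-close in $\|\cdot\|_{\mathfrak{L}}$ to $[(f_{\mathcal{T}})_{\mathcal{T}}]$; this gives density of $\bigcup_{\mathcal{R}}\psi_{\mathcal{R}}(L^{p}_{\mathcal{R}})$ in $\mathfrak{L}$, and Lemma \ref{L2.1} concludes.

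The only real obstacle in either version is notational book-keeping: one must be careful that the zero values assigned by \eqref{An} at indices $\mathcal{T}\not\supset\mathcal{R}$ do not pollute either the norm $\|\cdot\|_{\mathfrak{L}}$ or the map $\mathfrak{I}_{p}$. This is handled uniformly by the remark that $\{\mathcal{T}:\mathcal{R}\subset\mathcal{T}\}$ is a cofinal subset of $\mathcal{P}$, so all net limits are computed on a tail where $g_{\mathcal{T}}=\pi^{\ast}_{\mathcal{R}\mathcal{T}}(f_{\mathcal{R}})$ and the zero branch is never seen.
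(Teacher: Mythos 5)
Your proposal is correct and takes essentially the same route as the paper: both arguments reduce to the intertwining identity $\mathfrak{I}_{p}\circ\psi_{\mathcal{T}}=\phi_{\mathcal{T}}$ (your eventually-constant net computation is exactly the paper's evaluation of $\lim_{\mathcal{Q}}h_{\mathcal{Q}}$) and then transfer the colimit property of Theorem \ref{th4.7} across the isometric isomorphism of Theorem \ref{T1}, you by explicitly chasing the universal property, the paper by invoking uniqueness of colimits up to isomorphism of cocones. Your alternative closing via Lemma \ref{L2.1} (density of $\bigcup_{\mathcal{R}}\psi_{\mathcal{R}}(L^{p}_{\mathcal{R}})$ from the co-Cauchy condition) is also sound, since Theorem \ref{T1} already supplies the completeness of $\mathfrak{L}(L^{p}(M^{\mathcal{T}},\mu^{\mathcal{T}}_{\mathbf{x}_{0}}))$ needed to regard it as a cocone in $\mathfrak{Ban}$.
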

\begin{proof}
	Since the colimit is unique up to a unique isomorphism on the category of cocones, it is enough to prove that $(\mathfrak{L}( L^{p}(M^{\mathcal{T}},\mu^{\mathcal{T}}_{\mathbf{x}_{0}})),\psi_{\mathcal{T}})$ is isomorphic to $(L^{p}(\mathcal{C}_{\mathbf{x}_{0}}(M),\mu_{\mathbf{x}_{0}}),\phi_{\mathcal{T}})$ in the category of cocones. By the proof of Theorem \ref{T1}, the map 
	\begin{equation*}
	\mathfrak{I}_{p}:\mathfrak{L}( L^{p}(M^{\mathcal{T}},\mu^{\mathcal{T}}_{\mathbf{x}_{0}}))\longrightarrow L^{p}(\mathcal{C}_{\mathbf{x}_{0}}(M),\mu_{\mathbf{x}_{0}}), \quad (f_{\mathcal{T}})_{\mathcal{T}\in\mathcal{P}}\mapsto \lim_{\mathcal{T}}\phi_{\mathcal{T}}(f_{\mathcal{T}}).
	\end{equation*}
	is an isometric isomorphism. To prove that it defines an isomorphism of cocones, we need to verify the commutativity of the diagram of Figure 2 for each $\mathcal{T}\in\mathcal{P}$.
	\begin{center}
	\begin{figure}[h!]
		\[
		\xymatrix@C+1em@R+1em{ 
			L^{p}(M^{\mathcal{T}},\mu^{\mathcal{T}}_{\mathbf{x}_{0}}) \ar^{\pi^{\ast}_{\mathcal{T}\mathcal{T}'}}[rr] \ar^{\psi_{\mathcal{T}}}[dr] \ar@/_1em/_{\phi_{\mathcal{T}}}[ddr] & & L^{p}(M^{\mathcal{T}'},\mu^{\mathcal{T}'}_{\mathbf{x}_{0}}) \ar_{\psi_{\mathcal{T}'}}[dl] \ar@/^1em/^{\phi_{\mathcal{T}'}}[ddl] \\
			&\mathfrak{L}( L^{p}(M^{\mathcal{T}},\mu^{\mathcal{T}}_{\mathbf{x}_{0}})) \ar^{\mathfrak{I}_{p}}[d] & \\
			&  L^{p}(\mathcal{C}_{\mathbf{x}_{0}}(M),\mu_{\mathbf{x}_{0}}) &
		}
		\]
		\label{F9}
		\caption{Diagram of the Universal Property II}
	\end{figure}
\end{center}
\noindent Take $f_{\mathcal{T}}\in L^{p}(M^{\mathcal{T}},\mu^{\mathcal{T}}_{\mathbf{x}_{0}})$, then by a simple computation we obtain
\begin{equation*}
(\mathfrak{I}_{p}\circ \psi_{\mathcal{T}})(f_{\mathcal{T}})=\lim_{\mathcal{Q}}h_{\mathcal{Q}}, \quad h_{\mathcal{Q}}:=\left\{
\begin{array}{ll}
\phi_{\mathcal{T}}(f_{\mathcal{T}}) & \text{ if } \mathcal{T}\subset\mathcal{Q} \\
0 & \text{ else } 
\end{array}
\right.
\end{equation*}
and thus $(\mathfrak{I}_{p}\circ \psi_{\mathcal{T}})(f_{\mathcal{T}})=\phi_{\mathcal{T}}(f_{\mathcal{T}})$. Hence the diagram of Figure 2 commutes and the proof is concluded.
\end{proof}
Here in after, as a direct consequence of Theorem \ref{JR}, we can and we shall denote $\mathfrak{L}( L^{p}(M^{\mathcal{T}},\mu^{\mathcal{T}}_{\mathbf{x}_{0}}))\equiv \underrightarrow{\lim} \ L^{p}(M^{\mathcal{T}},\mu^{\mathcal{T}}_{\mathbf{x}_{0}})$.
\par As a rather direct application of the isometric property of $\mathfrak{I}_{p}$, we obtain our integral limit approximation. Indeed, if $F\in L^{p}(\mathcal{C}_{\mathbf{x}_{0}}(M),\mu_{\mathbf{x}_{0}})$, there exists an element $(f_{\mathcal{T}})_{\mathcal{T}\in\mathcal{P}}\in\underrightarrow{\lim} \ L^{p}(M^{\mathcal{T}},\mu^{\mathcal{T}}_{\mathbf{x}_{0}})$ such that $\|F\|_{L^{p}}=\|(f_{\mathcal{T}})_{\mathcal{T}}\|_{\mathfrak{L}}$. Therefore, we have
	\begin{equation*}
	\int_{\mathcal{C}_{\mathbf{x}_{0}}(M)}|F|^{p}\ d\mu_{\mathbf{x}_{0}}=\lim_{\mathcal{T}}\int_{M^{\mathcal{T}}}|f_{\mathcal{T}}|^{p}\ d\mu_{\mathbf{x}_{0}}^{\mathcal{T}}.
	\end{equation*}
\noindent Furthermore, if we take into account that given $F\in L^{1}(\mathcal{C}_{\mathbf{x}_{0}}(M),\mu_{\mathbf{x}_{0}})$, we can write it as $F=F^{+}-F^{-}$ with $F^{+}, F^{-}$ positive and $F^{+},F^{-}\in L^{1}(\mathcal{C}_{\mathbf{x}_{0}}(M),\mu_{\mathbf{x}_{0}})$,  then we get the following result.

\begin{theorem}\label{C4.4}
	Let $F\in L^{1}(\mathcal{C}_{\mathbf{x}_{0}}(M),\mu_{\mathbf{x}_{0}})$, then there exists $(f_{\mathcal{T}})_{\mathcal{T}\in\mathcal{P}}\in\bigtimes_{\mathcal{T}\in\mathcal{P}}L^{1}(M^{\mathcal{T}},\mu_{\mathbf{x}_{0}}^{\mathcal{T}})$ such that 
	\begin{equation*}
	\int_{\mathcal{C}_{\mathbf{x}_{0}}(M)}F \ d\mu_{\mathbf{x}_{0}}=\lim_{\mathcal{T}}\int_{M^{\mathcal{T}}}f_{\mathcal{T}}\ d\mu_{\mathbf{x}_{0}}^{\mathcal{T}}.
	\end{equation*}
\end{theorem}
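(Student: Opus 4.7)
The plan is to reduce the claim to the already-established isometric identification $\mathfrak{I}_{1}$ of Theorem \ref{T1} by decomposing $F$ into its positive and negative parts. The reason a direct application of Theorem \ref{T1} is not enough is that the isometric identity $\|F\|_{L^{1}}=\lim_{\mathcal{T}}\|f_{\mathcal{T}}\|_{L^{1}_{\mathcal{T}}}$ translates into $\int_{\mathcal{C}_{\mathbf{x}_{0}}(M)}|F|\,d\mu_{\mathbf{x}_{0}}=\lim_{\mathcal{T}}\int_{M^{\mathcal{T}}}|f_{\mathcal{T}}|\,d\mu_{\mathbf{x}_{0}}^{\mathcal{T}}$, i.e.\ it controls the integral of the absolute value rather than the signed integral. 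This mismatch disappears once we restrict to nonnegative integrands, which is exactly the purpose of the Jordan decomposition.

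Concretely, I would first write $F=F^{+}-F^{-}$ with $F^{\pm}:=\max(\pm F,0)\in L^{1}(\mathcal{C}_{\mathbf{x}_{0}}(M),\mu_{\mathbf{x}_{0}})$ and $F^{\pm}\geq 0$. Applying the surjectivity statement of Theorem \ref{T1} twice, I obtain nets $(f^{+}_{\mathcal{T}})_{\mathcal{T}\in\mathcal{P}},(f^{-}_{\mathcal{T}})_{\mathcal{T}\in\mathcal{P}}\in\bigtimes_{\mathcal{T}\in\mathcal{P}}L^{1}(M^{\mathcal{T}},\mu_{\mathbf{x}_{0}}^{\mathcal{T}})$ whose equivalence classes in $\mathfrak{L}(L^{1}(M^{\mathcal{T}},\mu_{\mathbf{x}_{0}}^{\mathcal{T}}))$ are sent to $F^{\pm}$ by $\mathfrak{I}_{1}$. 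Using the isometric property together with the sign condition $F^{\pm}\geq 0$ then yields
\begin{equation*}
\int_{\mathcal{C}_{\mathbf{x}_{0}}(M)}F^{\pm}\,d\mu_{\mathbf{x}_{0}}=\|F^{\pm}\|_{L^{1}}=\lim_{\mathcal{T}}\|f^{\pm}_{\mathcal{T}}\|_{L^{1}_{\mathcal{T}}}=\lim_{\mathcal{T}}\int_{M^{\mathcal{T}}}|f^{\pm}_{\mathcal{T}}|\,d\mu_{\mathbf{x}_{0}}^{\mathcal{T}}.
\end{equation*}

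Finally, I would set $f_{\mathcal{T}}:=|f^{+}_{\mathcal{T}}|-|f^{-}_{\mathcal{T}}|\in L^{1}(M^{\mathcal{T}},\mu_{\mathbf{x}_{0}}^{\mathcal{T}})$ for every $\mathcal{T}\in\mathcal{P}$, so that $(f_{\mathcal{T}})_{\mathcal{T}\in\mathcal{P}}$ lies in the required Cartesian product. Subtracting the two limits displayed above and using linearity of the integral gives
\begin{equation*}
\lim_{\mathcal{T}}\int_{M^{\mathcal{T}}}f_{\mathcal{T}}\,d\mu_{\mathbf{x}_{0}}^{\mathcal{T}}=\int_{\mathcal{C}_{\mathbf{x}_{0}}(M)}F^{+}\,d\mu_{\mathbf{x}_{0}}-\int_{\mathcal{C}_{\mathbf{x}_{0}}(M)}F^{-}\,d\mu_{\mathbf{x}_{0}}=\int_{\mathcal{C}_{\mathbf{x}_{0}}(M)}F\,d\mu_{\mathbf{x}_{0}},
\end{equation*}
which is the desired identity. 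I do not expect a real obstacle: the only conceptual point is the need to replace $f^{\pm}_{\mathcal{T}}$ by $|f^{\pm}_{\mathcal{T}}|$, which is forced by the fact that the norm on $\mathfrak{L}$ arises from the $L^{1}$ norm rather than from the signed integral, and without which the isometric equality would not directly produce an identity of integrals.
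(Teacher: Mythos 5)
Your proof is correct and follows essentially the same route as the paper: decompose $F=F^{+}-F^{-}$ and apply the isometric property of $\mathfrak{I}_{1}$ from Theorem \ref{T1} to each nonnegative part, where the $L^{1}$-norm coincides with the integral. Your explicit replacement of $f^{\pm}_{\mathcal{T}}$ by $|f^{\pm}_{\mathcal{T}}|$ is a small but welcome clarification of a step the paper leaves implicit.
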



\section{Stratonovich Stochastic Integral}

In this section, we will apply the developed theory to a particular example, the Stratonovich stochastic integral. Our approach can be used to get a finite dimensional approximation scheme for this type of integrals, in contrast to the Andersson and Driver's framework \cite{1} that only can be used if the involved functional is bounded and continuous. 
\par Let us firstly recall briefly some basic facts about stochastic integration. Let $(X,Y)=(\{X_{t}\}_{t\in[0,1]},\{Y_{t}\}_{t\in[0,1]})$ be a pair of bounded $\mathbb{R}$-valued semimartingales defined in the probability space $(\Omega,\mathcal{F},\mu)$. Then, the Stratonovich integral of $X$ with respect to $Y$ is defined by the relation
\begin{equation*}
	\int_{0}^{1}X_{t}\circ dY_{t}:=\lim_{L^{2}(\mu)}\sum_{i=1}^{n}\frac{X_{t^{n}_{i}}+X_{t^{n}_{i-1}}}{2}(Y_{t^{n}_{i}}-Y_{t^{n}_{i-1}})\in L^{2}(\Omega,\mu)
\end{equation*}
where $\mathcal{T}=\{\mathcal{T}_{n}\}_{n\in\mathbb{N}}$, $\mathcal{T}_{n}:=\{t^{n}_{i}\}_{i=0}^{n}$, is a fixed partition with mesh zero of $[0,1]$. It is related to the It\^{o} stochastic integral by the relation
\begin{equation*}
\int_{0}^{1}X_{t}\circ dY_{t}=\int_{0}^{1}X_{t} \ dY_{t}+[X,Y]_{t}
\end{equation*}
where $[X,Y]_{t}$ denotes the covariation of the processes $(X,Y)$ and $dY_{t}$ denotes the It\^{o} differential.
In the case in which $(X,Y)=(\{X_{t}\}_{t\in[0,1]},\{Y_{t}\}_{t\in[0,1]})$ are bounded $\mathbb{R}^{N}$-valued semimartingales, we define
\begin{equation*}
\int_{0}^{1}X_{t}\circ dY_{t}:=\sum_{i=1}^{N}\int_{0}^{1}X^{i}_{t}\circ dY^{i}_{t}.
\end{equation*}
It is worth to mention that the usual definition of the Stratonovich integral is under convergence in probability \cite[Th. 26, Ch. V]{P}. Since we will deal with semimartingales defined on compact manifolds, we only need the definition for bounded ones, in which case, the convergence in probability implies the $L^{2}$-convergence as a consequence of Vitali's convergence Theorem.
\par It must be taken into account that if $(M,g)$ is a closed Riemannian manifold embedded in the Euclidean space $\mathbb{R}^{N}$, by \cite[Pr. 3.2.1]{S}, the $M$-valued stochastic process $X=\{X_{t}\}_{t\in[0,1]}$ defined by the coordinate functionals of $(\mathcal{C}_{\mathbf{x}_{0}}(M),\mu_{\mathbf{x}_{0}})$, defines a $\mathbb{R}^{N}$-valued bounded semimartingale. This implies by \cite[Pr. 1.2.7, (i)]{S} that $\{f(X_{t})\}_{t\in[0,1]}$ is a real valued semimartingale for each $f\in\mathcal{C}^{\infty}(M)$. Therefore the  Stratonovich stochastic integral of $f(X)$ with respect to $X$, where $f\in\mathcal{C}^{\infty}(M,\mathbb{R}^{N})$, $f=(f_{1},f_{2},...,f_{N})$, is well defined. 
\par The main result of this section is the following. It establishes which is exactly the preimage of the functional $\int_{0}^{1}f(X_{t})\circ dX_{t}\in L^{2}(\mathcal{C}_{\mathbf{x}_{0}}(M),\mu_{\mathbf{x}_{0}})$ via the identification $\mathfrak{I}_{2}:\underrightarrow{\lim} \ L^{2}(M^{\mathcal{T}},\mu_{\mathbf{x}_{0}}^{\mathcal{T}})\to L^{2}(\mathcal{C}_{\mathbf{x}_{0}}(M),\mu_{\mathbf{x}_{0}})$.

\begin{theorem}\label{th4.1}
	Let $(M,g)$ be a closed Riemannian manifold embedded in the Euclidean space $\mathbb{R}^{N}$, $f\in\mathcal{C}^{\infty}(M,\mathbb{R}^{N})$, $f=(f_{1},f_{2},...,f_{N})$, and $\{X_{t}\}_{t\in[0,1]}$ the $M$-valued semimartingale defined by the coordinate functionals of $(\mathcal{C}_{\mathbf{x}_{0}}(M),\mu_{\mathbf{x}_{0}})$. Then
	\begin{equation*}
	\mathfrak{I}_{2}\left(\sum_{j=1}^{N}\sum_{i=1}^{n}\frac{f_{j}(\mathbf{x}_{t_{i}})+f_{j}(\mathbf{x}_{t_{i-1}})}{2}(x^{j}_{t_{i}}-x^{j}_{t_{i-1}})\right)_{\mathcal{T}\in\mathcal{P}}=\int_{0}^{1}f(X_{t})\circ dX_{t},
	\end{equation*}
	where $\mathcal{T}=\{0=t_{0}<t_{1}<\cdots<t_{n}\}$ and $\mathbf{x}_{t_{i}}=(x^{1}_{t_{i}},\cdots,x^{N}_{t_{i}})\in M\subset \mathbb{R}^{N}$.
\end{theorem}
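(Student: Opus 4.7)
The plan is to verify two things: first, that the family
\[
f_\mathcal{T}(\mathbf{x}_{t_1},\ldots,\mathbf{x}_{t_n}) := \sum_{j=1}^{N}\sum_{i=1}^{n}\frac{f_j(\mathbf{x}_{t_i})+f_j(\mathbf{x}_{t_{i-1}})}{2}\bigl(x^j_{t_i}-x^j_{t_{i-1}}\bigr)
\]
belongs to $\underrightarrow{\lim}\,L^2(M^\mathcal{T},\mu_{\mathbf{x}_0}^\mathcal{T})$, i.e., is co-Cauchy; and second, that its image under $\mathfrak{I}_2$ equals $\int_0^1 f(X_t)\circ dX_t$. Unwinding the definition of $\phi_\mathcal{T}$ gives
\[
(\phi_\mathcal{T}(f_\mathcal{T}))(\gamma)=\sum_{j=1}^{N}\sum_{i=1}^{n}\frac{f_j(\gamma(t_i))+f_j(\gamma(t_{i-1}))}{2}\bigl(\gamma^j(t_i)-\gamma^j(t_{i-1})\bigr),
\]
so $\phi_\mathcal{T}(f_\mathcal{T})$ is precisely the Stratonovich Riemann approximation attached to the partition $\mathcal{T}$, evaluated along the coordinate semimartingale $X$. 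Since $f$ is smooth and $M$ is compact, each $f_\mathcal{T}$ is bounded, hence in $L^2(M^\mathcal{T},\mu_{\mathbf{x}_0}^\mathcal{T})$.

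The decisive analytic step is a uniform strengthening of the mesh-zero definition: for every $\varepsilon>0$ there should exist $\delta>0$ such that $\|\phi_\mathcal{T}(f_\mathcal{T})-\int_0^1 f(X)\circ dX\|_{L^2}<\varepsilon$ for \emph{every} partition $\mathcal{T}\in\mathcal{P}$ with mesh $|\mathcal{T}|<\delta$. The cleanest way to obtain this is to decompose each Stratonovich sum as an It\^o sum plus one half of a quadratic-covariation sum and to bound each piece uniformly in the mesh: the It\^o part via the It\^o isometry combined with the Lipschitz bound on $f$ on the compact manifold $M$, and the covariation part via the absolute continuity of the brackets $[X^j,X^k]$ in time. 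A less constructive alternative extracts the uniform statement from the paper's mesh-zero definition by contradiction: if it failed, one could produce a sequence $(\mathcal{T}_n)\subset\mathcal{P}$ with $|\mathcal{T}_n|\to 0$ along which the Stratonovich sums do not converge to the integral in $L^2$, contradicting the independence of the mesh-zero limit from the chosen sequence. I expect this to be the main obstacle, since the paper's definition is only along a fixed sequence, whereas the colimit formalism demands a genuine net limit along $(\mathcal{P},\subseteq)$.

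With the uniform estimate in hand the rest is routine. Given $\varepsilon>0$, pick $\mathcal{R}\in\mathcal{P}$ with $|\mathcal{R}|<\delta$; any refinement $\mathcal{T}\supseteq\mathcal{R}$ satisfies $|\mathcal{T}|\leq|\mathcal{R}|<\delta$, so $\phi_\mathcal{T}(f_\mathcal{T})$ is within $\varepsilon$ of $\int_0^1 f(X)\circ dX$ in $L^2$, proving $\lim_\mathcal{T}\phi_\mathcal{T}(f_\mathcal{T})=\int_0^1 f(X_t)\circ dX_t$ as a net limit. In particular the net is Cauchy, and the isometric identity
\[
\|\phi_\mathcal{T}(f_\mathcal{T})-\phi_{\mathcal{T}'}(f_{\mathcal{T}'})\|_{L^2}=\|\pi^*_{\mathcal{T}\mathcal{T}'}(f_\mathcal{T})-f_{\mathcal{T}'}\|_{L^2_{\mathcal{T}'}}\qquad(\mathcal{T}\subseteq\mathcal{T}'),
\]
already exploited in the proof of Theorem \ref{T1}, translates this Cauchyness into the co-Cauchy property of $(f_\mathcal{T})_\mathcal{T}$. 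The definition of $\mathfrak{I}_2$ then yields directly $\mathfrak{I}_2\bigl((f_\mathcal{T})_\mathcal{T}\bigr)=\lim_\mathcal{T}\phi_\mathcal{T}(f_\mathcal{T})=\int_0^1 f(X_t)\circ dX_t$, which is the claimed identity.
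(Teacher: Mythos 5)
Your proposal is correct and follows essentially the same route as the paper: identify $\phi_{\mathcal{T}}(f_{\mathcal{T}})$ with the Stratonovich--Riemann sums, obtain the net limit $\lim_{\mathcal{T}}\phi_{\mathcal{T}}(f_{\mathcal{T}})=\int_{0}^{1}f(X_{t})\circ dX_{t}$ in $L^{2}$, convert net Cauchyness into the co-Cauchy property via the isometry $\|\pi^{\ast}_{\mathcal{T}\mathcal{T}'}(f_{\mathcal{T}})-f_{\mathcal{T}'}\|_{L^{2}_{\mathcal{T}'}}=\|\phi_{\mathcal{T}}(f_{\mathcal{T}})-\phi_{\mathcal{T}'}(f_{\mathcal{T}'})\|_{L^{2}}$, and apply the definition of $\mathfrak{I}_{2}$. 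The only difference is that you explicitly justify the passage from the sequential mesh-zero definition to the net limit over $(\mathcal{P},\subseteq)$ (via a uniform-in-mesh estimate or a contradiction argument using convergence along every mesh-zero sequence), a step the paper simply asserts; your contradiction argument is a valid way to supply it.
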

\begin{proof}
	Since $M$ is compact and embedded in $\mathbb{R}^{N}$, the process $X=\{X_{t}\}_{t\in[0,1]}$ is, in particular, a bounded $\mathbb{R}^{N}$-valued semimartingale and its Stratonovich integral is defined in the usual manner by
	\begin{align*}
	\int_{0}^{1}f(X_{t})\circ dX_{t}& :=\sum_{j=1}^{N}\int_{0}^{1}f_{j}(X_{t})\circ dX^{j}_{t}\\
	& =\lim_{L^{2}(\mu_{\mathbf{x}_{0}})}\sum_{j=1}^{N}\sum_{i=1}^{n}\frac{f_{j}(X_{t^{n}_{i}})+f_{j}(X_{t^{n}_{i-1}})}{2}(X^{j}_{t^{n}_{i}}-X^{j}_{t^{n}_{i-1}}),
	\end{align*}
	for every partition $\mathcal{Q}=\{\mathcal{Q}_{n}\}_{n\in\mathbb{N}}$, $\mathcal{Q}_{n}:=\{t^{n}_{i}\}_{i=0}^{n}$ with mesh zero of $[0,1]$ satisfying $t^{n}_{0}=0$ for each $n\in\mathbb{N}$. This limit can be expressed as a convergence of nets via
	\begin{equation*}
	\int_{0}^{1}f(X_{t})\circ dX_{t}=\lim_{\mathcal{T}}\sum_{j=1}^{N}\sum_{i=1}^{n}\frac{f_{j}(X_{t_{i}})+f_{j}(X_{t_{i-1}})}{2}(X^{j}_{t_{i}}-X^{j}_{t_{i-1}}),
	\end{equation*}
	where $\mathcal{T}=\{0=t_{0}<t_{1}<\cdots<t_{n}\}$. The limit is understood in $L^{2}$-convergence. On the other hand, we define the maps $F^{j}_{\mathcal{T}}: M^{\mathcal{T}} \to \mathbb{R}$, $j\in \{1,2,...,N\}$, by
	\begin{equation*}
	F^{j}_{\mathcal{T}}(\mathbf{x}_{t_{i}})_{i=1}^{n}:=\sum_{i=1}^{n} \frac{f_{j}(\mathbf{x}_{t_{i}})+f_{j}(\mathbf{x}_{t_{i-1}})}{2}(x^{j}_{t_{i}}-x^{j}_{t_{i-1}}),
	\end{equation*}
	where $\mathcal{T}=\{0=t_{0}<t_{1}<\cdots<t_{n}\}$ and $\mathbf{x}_{t_{i}}=(x^{1}_{t_{i}},x^{2}_{t_{i}},\cdots,x^{n}_{t_{i}})$. Then, by the boundedness of each $F^{j}_{\mathcal{T}}$, it follows that $F^{j}_{\mathcal{T}}\in L^{2}(M^{\mathcal{T}},\mu^{\mathcal{T}}_{\mathbf{x}_{0}})$ and by the definition of the morphisms $\phi_{\mathcal{T}}:L^{2}(M^{\mathcal{T}},\mu_{\mathbf{x}_{0}}^{\mathcal{T}})\to L^{2}(\mathcal{C}_{\mathbf{x}_{0}}(M),\mu_{\mathbf{x}_{0}})$, we obtain $$\phi_{\mathcal{T}}(F^{j}_{\mathcal{T}})=\sum_{i=1}^{n}\frac{f_{j}(X_{t_{i}})+f_{j}(X_{t_{i-1}})}{2}(X^{j}_{t_{i}}-X^{j}_{t_{i-1}}).$$
	Therefore, if we prove that $(F^{j}_{\mathcal{T}})_{\mathcal{T}\in\mathcal{P}}\in \underrightarrow{\lim} \ L^{2}(M^{\mathcal{T}},\mu_{\mathbf{x}_{0}}^{\mathcal{T}})$ for each $j\in\{1,2,...,N\}$, by the definition of $\mathfrak{I}_{2}$ and the linearity of $\phi_{\mathcal{T}}$, we will deduce 
	\begin{align*}
	\mathfrak{I}_{2}\left(\sum_{j=1}^{N}F^{j}_{\mathcal{T}}\right)_{\mathcal{T}\in\mathcal{P}}&=\lim_{\mathcal{T}}\sum_{j=1}^{N}\phi_{\mathcal{T}}(F^{j}_{\mathcal{T}})\\ 
	& =\lim_{\mathcal{T}}\sum_{j=1}^{N}\sum_{i=1}^{n}\frac{f_{j}(X_{t_{i}})+f_{j}(X_{t_{i-1}})}{2}(X^{j}_{t_{i}}-X^{j}_{t_{i-1}})\\
	& =\int_{0}^{1}f(X_{t})\circ dX_{t},
	\end{align*}
	obtaining the required result. Finally, we prove that, indeed, $(F^{j}_{\mathcal{T}})_{\mathcal{T}\in\mathcal{P}}\in \underrightarrow{\lim} \ L^{2}(M^{\mathcal{T}},\mu_{\mathbf{x}_{0}}^{\mathcal{T}})$ for each $j\in\{1,2,...,N\}$. Since each $\phi_{\mathcal{T}}$ is an isometric isomorphism, we have
	\begin{align*}
	\|\pi_{\mathcal{R}\mathcal{T}}^{\ast}(F^{j}_{\mathcal{R}})-F^{j}_{\mathcal{T}}\|_{L^{2}_{\mathcal{T}}}&=\|(\phi_{\mathcal{T}}\circ\pi_{\mathcal{R}\mathcal{T}}^{\ast})(F^{j}_{\mathcal{R}})-\phi_{\mathcal{T}}(F^{j}_{\mathcal{T}})\|_{L^{2}}\\
	&=\|\phi_{\mathcal{R}}(F^{j}_{\mathcal{R}})-\phi_{\mathcal{T}}(F^{j}_{\mathcal{T}})\|_{L^{2}}, \quad \mathcal{R}\subset \mathcal{T}.
	\end{align*}
	The convergence of $\{\phi_{\mathcal{T}}(F^{j}_{\mathcal{T}})\}_{\mathcal{T}\in\mathcal{P}}$ (that is justified by the existence of the integral $\int_{0}^{1}f(X_{t})\circ dX_{t}$) together with the last identity, implies the co-Cauchy property for $(F^{j}_{\mathcal{T}})_{\mathcal{T}\in\mathcal{P}}$. Hence $(F^{j}_{\mathcal{T}})_{\mathcal{T}\in\mathcal{P}}\in \underrightarrow{\lim} \ L^{2}(M^{\mathcal{T}},\mu_{\mathbf{x}_{0}}^{\mathcal{T}})$ and the proof is concluded.
\end{proof}

As a direct consequence of the preceding result and the isometric property of $\mathfrak{I}_{2}$, we deduce the following finite dimensional approximation under the same hypothesis of Theorem \ref{th4.1},
\begin{equation*}
\int_{\mathcal{C}_{\mathbf{x}_{0}}(M)}\left| \int_{0}^{1} f(X_{t})\circ dX_{t}\right|^{2} d\mu_{\mathbf{x}_{0}}=\lim_{\mathcal{T}}\int_{M^{\mathcal{T}}}\left|\sum_{j=1}^{N}\sum_{i=1}^{n}\frac{f_{j}(\mathbf{x}_{t_{i}})+f_{j}(\mathbf{x}_{t_{i-1}})}{2}(x^{j}_{t_{i}}-x^{j}_{t_{i-1}})\right|^{2} d\mu^{\mathcal{T}}_{\mathbf{x}_{0}},
\end{equation*}
where $\mathcal{T}=\{0=t_{0}<t_{1}<\cdots<t_{n}\}$. Finally, it is convenient to remark that all the considerations made in this section can be adapted easily to the cover the It\^{o} stochastic integration.

\section{Approximation Scheme for the Geometric Framework}

In this final section, we will give an approximation scheme based in the geometric measure introduced by Andersson and Driver in \cite{1}. Up to here, we have proved that the spaces $L^{p}(\mathcal{C}_{\mathbf{x}_{0}}(M),\mu_{\mathbf{x}_{0}})$, $1\leq p<\infty$, define a realization of the colimit of the diagram $(L^{p}(M^{\mathcal{T}},\mu_{\mathbf{x}_{0}}^{\mathcal{T}}),\pi^{\ast}_{\mathcal{T}\mathcal{T}'})$, where the measures $\mu_{\mathbf{x}_{0}}^{\mathcal{T}}$ are given by
\begin{equation*}
d\mu_{\mathbf{x}_{0}}^{\mathcal{T}}=\prod_{i=1}^{n}p_{t_{i}-t_{i-1}}(x_{t_{i}},x_{t_{i-1}})\prod_{i=1}^{n}d\mu(x_{t_{i}}), \quad \mathcal{T}=\{0=t_{0}<t_{1}<\cdots<t_{n}\}.
\end{equation*}
Moreover, we have provided a particular realization, $\underrightarrow{\lim} \ L^{p}(M^{\mathcal{T}},\mu_{\mathbf{x}_{0}}^{\mathcal{T}})$, of the colimit that allows to stablish a finite dimensional approximation result for integrable functionals.
\par However, observe that this approximation scheme does not follows the philosophy of Andersson and Driver approach materialized in equation \eqref{E2}. For this reason, in this section, we adapt our scheme to cover the measure considered by Andersson and Driven in \cite{1}. We start recalling some facts and definitions of \cite{1}. Through this section, the notation $\mathcal{C}_{0}(\mathbb{R}^{m})$ stands for the space of continuous paths $\gamma:[0,1]\to\mathbb{R}^{m}$ such that $\gamma(0)=0$.

\subsection{Piecewise Linear Path Space} Consider $H(\mathbb{R}^{m})\subset \mathcal{C}_{0}(\mathbb{R}^{m})$ the subspace consisting on finite energy paths
\begin{equation*}
H(\mathbb{R}^{m}):=\{\gamma\in\mathcal{C}_{0}(\mathbb{R}^{m}): \gamma \text{ is absolutely continuous and } E_{\mathbb{R}^{m}}(\gamma)<\infty\},
\end{equation*}
where the energy functional is defined by
\begin{equation*}
E_{\mathbb{R}^{m}}(\gamma):=\int_{0}^{1}\langle \gamma'(s),\gamma'(s)\rangle \ ds.
\end{equation*}
For each partition $\mathcal{T}=\{0=t_{0}<t_{1}<\cdots<t_{n}=1\}$, the space of piecewise linear paths on $\mathbb{R}^{m}$ with respect to $\mathcal{T}$, subsequently denoted by $H_{\mathcal{T}}(\mathbb{R}^{m})$, is defined through
\begin{equation*}
H_{\mathcal{T}}(\mathbb{R}^{m}):=\{\gamma\in\mathcal{C}_{0}(\mathbb{R}^{m}): \gamma \text{ is linear for } t\notin \mathcal{T}\}.
\end{equation*}
Clearly $H_{\mathcal{T}}(\mathbb{R}^{m})$ is linearly isomorphic to $\mathbb{R}^{m\times \mathcal{T}}$, where
$$\mathbb{R}^{m\times\mathcal{T}}:=\bigtimes_{t\in\mathcal{T}\backslash\{0\}}\mathbb{R}^{m},$$
via the linear isomorphism
\begin{equation*}
\Pi_{\mathcal{T}}:H_{\mathcal{T}}(\mathbb{R}^{m})\longrightarrow \mathbb{R}^{m\times \mathcal{T}}, \quad \Pi_{\mathcal{T}}(\gamma)=(\gamma(t_{1}),\gamma(t_{2}),...,\gamma(t_{n})).
\end{equation*}
Since $H_{\mathcal{T}}(\mathbb{R}^{m})$ is linear, it follows that $T_{\gamma}H_{\mathcal{T}}(\mathbb{R}^{m})\simeq H_{\mathcal{T}}(\mathbb{R}^{m})$ for each $\gamma\in H_{\mathcal{T}}(\mathbb{R}^{m})$. We introduce the Riemmanian metric on $H_{\mathcal{T}}(\mathbb{R}^{m})$, $h_{\mathcal{T}}\in\Gamma(T^{*}H_{\mathcal{T}}(\mathbb{R}^{m})\otimes T^{*}H_{\mathcal{T}}(\mathbb{R}^{m}))$, defined by
\begin{equation*}
h_{\mathcal{T}}(u,v):=\int_{0}^{1}\langle u'(s), v'(s)\rangle \ ds, \quad u,v\in T_{\gamma}H_{\mathcal{T}}(\mathbb{R}^{m}), \ \gamma\in H_{\mathcal{T}}(\mathbb{R}^{m}),
\end{equation*}
and its corresponding volume form $\Vol_{h_{\mathcal{T}}}\in\Gamma(\wedge^{m\times n}TH_{\mathcal{T}}(\mathbb{R}^{m}))$ determinated by 
\begin{equation*}
\Vol_{h_{\mathcal{T}}}(u_{1},u_{2},...,u_{m\times n}):=\sqrt{\det(h_{\mathcal{T}}(u_{i},u_{j})_{ij})},
\end{equation*}
where $\{u_{1},u_{2},...,u_{m\times n}\}\subset T_{\gamma}H_{\mathcal{T}}(\mathbb{R}^{m})$ is an oriented basis and $\gamma\in H_{\mathcal{T}}(\mathbb{R}^{m})$. Finally, we introduce a Borel measure $\mu_{\mathcal{T}}$ on $H_{\mathcal{T}}(\mathbb{R}^{m})$. As usual, when we deal with a measure associated with a Riemannian structure, we choose the Borel $\sigma$-algebra associated with the topology induced by the corresponding Riemannian metric. Along this section, this will be done several times without specifying it again.

\begin{definition}
For each partition $\mathcal{T}=\{0=t_{0}<t_{1}<\cdots<t_{n}=1\}$ of $[0,1]$, we denote by $\mu_{\mathcal{T}}$ the Borel measure on $H_{\mathcal{T}}(\mathbb{R}^{m})$ defined by the density
\begin{equation*}
d\mu_{\mathcal{T}}=\frac{1}{(\sqrt{2 \pi})^{mn}}\exp\left\{-\frac{1}{2}E_{\mathbb{R}^{m}}\right\}\Vol_{h_{\mathcal{T}}}.
\end{equation*}
\end{definition}

\subsection{Piecewise Geodesic Path Space} Now, we define the curved analogue of the measure space $(H_{\mathcal{T}}(\mathbb{R}^{m}),\mu_{\mathcal{T}})$. Let $(M,g)$ be a closed Riemannian manifold of dimension $m$. Consider $H(M)\subset \mathcal{C}_{\mathbf{x}_{0}}(M)$ to be the Hilbert manifold of finite energy paths, defined by
\begin{equation*}
H(M):=\{\gamma\in\mathcal{C}_{\mathbf{x}_{0}}(M): \gamma \text{ is absolutely continuous and } E(\gamma)<\infty\}
\end{equation*}
where the energy functional $E$ is given through
\begin{equation*}
E(\gamma):=\int_{0}^{1}g(\gamma'(s),\gamma'(s)) \ ds.
\end{equation*}
Recall that $\gamma\in \mathcal{C}_{\mathbf{x}_{0}}(M)$ is said to be absolutely continuous if $f\circ \gamma$ is absolutely continuous for all $f\in\mathcal{C}^{\infty}(M)$. The tangent space $T_{\gamma}H(M)$ to $H(M)$ at $\gamma$ can be identified with the space of absolutely continuous vector fields $X:[0,1]\to TM$ along $\gamma$ such that $X(0)=0$ and $G^{1}(X,X)<\infty$ where
\begin{equation*}
G^{1}(X,X):=\int_{0}^{1}g\left(\frac{\nabla X(t)}{dt},\frac{\nabla X(t)}{dt}\right) \ dt.
\end{equation*}
As usual, we denote
\begin{equation*}
\frac{\nabla X(t)}{dt}:=\slash\slash_{t}(\gamma)\frac{d}{dt}\{\slash\slash_{t}(\gamma)^{-1} X(t)\},
\end{equation*}
where $\slash\slash_{t}(\gamma):T_{\mathbf{x}_{0}}M\to T_{\gamma(\mathbf{x}_{0})}M$ denotes the parallel translation along $\gamma$ relative to the Levy-Civita covariant derivative $\nabla$.
\par Let $\mathcal{T}=\{0=t_{0}<t_{1}<\cdots<t_{n}=1\}$ be a partition of $[0,1]$. We define the subspace of $H(M)$,
\begin{equation*}
H_{\mathcal{T}}(M):=\left\{\gamma\in H(M)\cap \mathcal{C}^{2}([0,1]\backslash\mathcal{T},M): \nabla \gamma'(t)/dt=0 \text{ for } t\notin\mathcal{T} \right\},
\end{equation*}
consisting on piecewise geodesic paths in $H(M)$ which change directions only at the partition points. The space $H_{\mathcal{T}}(M)$ is a finite dimensional submanifold of $H(M)$ of dimension $n\times m$. For $\gamma\in H_{\mathcal{T}}(M)$, the tangent space $T_{\gamma}H_{\mathcal{T}}(M)$ can be identified with elements $X\in T_{\gamma}H(M)$ satisfying the Jacobi equations on $[0,1]\backslash \mathcal{T}$. In other words, $X\in T_{\gamma}H(M)$ is in $T_{\gamma}H_{\mathcal{T}}(M)$ if and only if 
\begin{equation*}
\frac{\nabla^{2}}{dt^{2}}X(t)=R(\gamma'(t),X(t))\gamma'(t),
\end{equation*}
where $R$ is the curvature tensor of $\nabla$. We can give to $H_{\mathcal{T}}(M)$ a Riemannian structure introducing the $\mathcal{T}$-metric. The $\mathcal{T}$-metric $g_{\mathcal{T}}\in\Gamma(T^{*}H_{\mathcal{T}}(M)\otimes T^{*}H_{\mathcal{T}}(M))$ is defined by
\begin{equation*}
g_{\mathcal{T}}(X,Y):=\sum_{i=1}^{n}g\left( \frac{\nabla X(t_{i-1}+)}{dt}, \frac{\nabla Y(t_{i-1}+)}{dt} \right) \Delta_{i}t, \quad X,Y\in T_{\gamma}H_{\mathcal{T}}(M),
\end{equation*}
for each $\gamma\in H_{\mathcal{T}}(M)$, where the notation $\nabla X(t_{i-1}+) / dt$ is a shorthand of $\lim_{t\downarrow t_{i-1}} \nabla X(t)/dt$. We denote by $\Vol_{g_{\mathcal{T}}}\in \Gamma(\wedge^{m\times m}TH_{\mathcal{T}}(M))$ the volume form associated to $g_{\mathcal{T}}$. It is determinated by
$$ \Vol_{g_{\mathcal{T}}}(X_{1},X_{2},...,X_{m\times n}):=\sqrt{\det(g_{\mathcal{T}}(X_{i},X_{j})_{ij})},$$
where $\{X_{1}, X_{2},...,X_{m\times n}\}\subset T_{\gamma}H_{\mathcal{T}}(M)$ is an oriented basis and $\gamma\in H_{\mathcal{T}}(M)$.
\begin{definition}
	For each partition $\mathcal{T}=\{0=t_{0}<t_{1}<\cdots<t_{n}=1\}$ of $[0,1]$, we denote by $\nu_{\mathcal{T}}$ the Borel measure on $H_{\mathcal{T}}(M)$ defined by the density
	\begin{equation*}
	d\nu_{\mathcal{T}}:=\frac{1}{(\sqrt{2 \pi})^{mn}}\exp\left\{-\frac{1}{2}E\right\}\Vol_{g_{\mathcal{T}}}.
	\end{equation*}
\end{definition}

This measure spaces $(H_{\mathcal{T}}(M),\mathcal{B}_{\mathcal{T}},\nu_{\mathcal{T}})$, where $\mathcal{B}_{\mathcal{T}}$ stands for the Borel $\sigma$-algebra of $H_{\mathcal{T}}(M)$, will be the finite dimensional candidates for the approximation scheme.

\subsection{Cartan's Development map} In general, it is not quite easy to deal with the manifold $H_{\mathcal{T}}(M)$. Due to this fact, we will identify this space through the well known space $H_{\mathcal{T}}(\mathbb{R}^{m})$.
The cited identification is done via the Cartan's development map. The Cartan's development map $\Phi: H(\mathbb{R}^{m})\to H(M)$ is defined, for $\alpha\in H(\mathbb{R}^{m})$, by $\Phi(\alpha):=\gamma$, where $\gamma\in H(M)$ is the unique solution of the ordinary differential equation 
\begin{equation}\label{E0}
\gamma'(t)=\slash\slash_{t} (\gamma) \alpha'(t), \quad \gamma(0)=\mathbf{x}_{0}.
\end{equation}
The anti-development map $\Phi^{-1}:H(M)\to H(\mathbb{R}^{m})$ is defined by $\Phi^{-1}(\gamma):=\alpha$ where $\alpha\in H(\mathbb{R}^{m})$ is given by
\begin{equation*}
\alpha(t):=\int_{0}^{t}\slash\slash_{r}^{-1}(\gamma)\gamma'(r) \ dr.
\end{equation*}
The map $\Phi:H(\mathbb{R}^{m})\to H(M)$ is bijective and smooth, hence it defines a diffeomorphism of infinite dimensional Hilbert manifolds, but it is not in general an isometry of Riemannian manifolds. The development map $\Phi:H(\mathbb{R}^{m})\to H(M)$ has the property
\begin{equation*}
\Phi(H_{\mathcal{T}}(\mathbb{R}^{m}))=H_{\mathcal{T}}(M).
\end{equation*}
We shall denote $\Phi|_{H_{\mathcal{T}}(\mathbb{R}^{m})}$ by $\Phi_{\mathcal{T}}$. 
\par A fundamental property of the Cartan's development map is that it preserves the $\mathcal{T}$-measure, in the sense that
\begin{equation}\label{I1}
\mu_{\mathcal{T}}(B)=\nu_{\mathcal{T}}(\Phi_{\mathcal{T}}(B)), 
\end{equation}
for each Borel subset $B$ of $H_{\mathcal{T}}(\mathbb{R}^{m})$. 
\par Moreover, it can be seen that the development map relates the measure $\nu_{\mathcal{T}}$ of $H_{\mathcal{T}}(M)$ with the well known heat kernel measure in the flat space $\mathbb{R}^{m\times \mathcal{T}}$. For a partition $\mathcal{T}=\{0=t_{0}<t_{1}<\cdots<t_{n}=1\}$, the heat kernel measure $\lambda_{0}^{\mathcal{T}}$ is the Borel measure on $\mathbb{R}^{m\times\mathcal{T}}$ defined by
\begin{equation*}
d\lambda_{0}^{\mathcal{T}}:=\prod_{i=1}^{n}p_{t_{i}-t_{i-1}}(x_{t_{i}},x_{t_{i-1}})\prod_{i=1}^{n}dx_{t_{i}},
\end{equation*}
where $x_{0}=0$ and $p_{t}(x,y)$ is the heat kernel of $\mathbb{R}^{m}$. In \cite[Lem. 4.11]{1}, it is proved the identity
\begin{equation}\label{I2}
\mu_{\mathcal{T}}(\Pi_{\mathcal{T}}^{-1}(B))=\lambda_{0}^{\mathcal{T}}(B)
\end{equation}
for each Borel subset $B$ of $\mathbb{R}^{m\times\mathcal{T}}$. Hence joining equations \eqref{I1} and \eqref{I2} yields
\begin{equation*}
\nu_{\mathcal{T}}((\Phi_{\mathcal{T}}\circ \Pi_{\mathcal{T}}^{-1})(B))=\lambda_{0}^{\mathcal{T}}(B).
\end{equation*}
As a consequence, we can reduce the structure of the space $L^{p}\left(H_{\mathcal{T}}(M),\nu_{\mathcal{T}}\right)$ to the more familiar one $L^{p}(\mathbb{R}^{m\times \mathcal{T}},\lambda^{\mathcal{T}}_{0})$. That is, for $1\leq p < \infty$, the following spaces are identified
\begin{equation*}
L^{p}\left(H_{\mathcal{T}}(M),\nu_{\mathcal{T}}\right)\simeq L^{p}(\mathbb{R}^{m\times \mathcal{T}},\lambda^{\mathcal{T}}_{0}),
\end{equation*}
via the isometric isomorphism 
\begin{equation}\label{EQG}
\Lambda_{\mathcal{T}}:L^{p}\left(H_{\mathcal{T}}(M),\nu_{\mathcal{T}}\right)\longrightarrow L^{p}(\mathbb{R}^{m\times \mathcal{T}},\lambda^{\mathcal{T}}_{0}), \quad \Lambda_{\mathcal{T}}(f):=f\circ\Phi_{\mathcal{T}}\circ\Pi_{\mathcal{T}}^{-1}.
\end{equation}
It can be also introduced an almost everywhere defined stochastic extension of the development map, called the stochastic development map $\tilde{\Phi}:\mathcal{C}_{0}(\mathbb{R}^{m})\to\mathcal{C}_{\mathbf{x}_{0}}(M)$ and its corresponding anti-development map $\tilde{\Phi}^{-1}:\mathcal{C}_{\mathbf{x}_{0}}(M)\to\mathcal{C}_{0}(\mathbb{R}^{m})$. It can be proved that they preserve the Wiener measure in the sense that
\begin{equation*}
\lambda_{0}(B)=\mu_{\mathbf{x}_{0}}(\tilde{\Phi}(B)),
\end{equation*}
for each Borel subset $B$ of $\mathcal{C}_{0}(\mathbb{R}^{m})$ where $\lambda_{0}$ is the Wiener measure on the classical Wiener space $\mathcal{C}_{0}(\mathbb{R}^{m})$ (see for instance \cite{J}). Hence, thanks to the stochastic development map, we can reduce the structure of the space $L^{p}(\mathcal{C}_{\mathbf{x}_{0}}(M),\mu_{\mathbf{x}_{0}})$ to the more familiar one $L^{p}(\mathcal{C}_{0}(\mathbb{R}^{m}),\lambda_{0})$ under the philosophy of \eqref{EQG}. For $1\leq p<\infty$, the following spaces are identified
\begin{equation*}
L^{p}(\mathcal{C}_{\mathbf{x}_{0}}(M),\mu_{\mathbf{x}_{0}})\simeq L^{p}(\mathcal{C}_{0}(\mathbb{R}^{m}),\lambda_{0}),
\end{equation*}
via the map
\begin{equation}\label{P7.3}
\Lambda:L^{p}(\mathcal{C}_{\mathbf{x}_{0}}(M),\mu_{\mathbf{x}_{0}})\longrightarrow L^{p}(\mathcal{C}_{0}(\mathbb{R}^{m}),\lambda_{0}), \quad \Lambda(f):=f\circ\tilde{\Phi}.
\end{equation}

\subsection{Finite Dimensional Approximation Scheme} To prove an analogue of Theorem \ref{th4.7} for the geometric measure $\nu_{\mathcal{T}}$, we will follow the following philosophy. Firstly we shall prove the approximation scheme for the classical Wiener measure space $(\mathcal{C}_{0}(\mathbb{R}^{m}),\lambda_{0})$ and then we will make use the identifications provided by equations \eqref{EQG} and \eqref{P7.3}, to translate the result to the geometric framework.
\par Let $\mathcal{P}$ be the directed set consisting on partitions of $[0,1]$ $$\mathcal{T}=\{0=t_{0}<t_{1}<\cdots<t_{n}=1\},$$ partially ordered by inclusion and consider the projectors $$\pi_{\mathcal{T}\mathcal{T}'}:\mathbb{R}^{m\times\mathcal{T}'}\longrightarrow \mathbb{R}^{m\times\mathcal{T}}, \quad \pi_{\mathcal{T}\mathcal{T}'}(x_{t})_{t\in\mathcal{T}'\backslash\{0\}}:=(x_{t})_{t\in\mathcal{T}\backslash\{0\}}$$ $$\pi_{\mathcal{T}}:\mathbb{R}^{m\times [0,1]}\longrightarrow\mathbb{R}^{m\times\mathcal{T}}, \quad \pi_{\mathcal{T}}(x_{t})_{t\in[0,1]}:=(x_{t})_{t\in\mathcal{T}\backslash\{0\}}.$$ In analogy with the preceding sections, we define the diagram $(L^{p}(\mathbb{R}^{m\times \mathcal{T}},\lambda_{0}^{\mathcal{T}}),\eta_{\mathcal{T}\mathcal{T}'})$ where the morphisms $\eta_{\mathcal{T}\mathcal{T}'}$, $\mathcal{T}\subset \mathcal{T}'$, are defined through
\begin{equation*}
\eta_{\mathcal{T}\mathcal{T}'}:L^{p}(\mathbb{R}^{m\times\mathcal{T}},\lambda_{0}^{\mathcal{T}})\longrightarrow L^{p}(\mathbb{R}^{m\times \mathcal{T}'},\lambda_{0}^{\mathcal{T}'}), \quad \eta_{\mathcal{T}\mathcal{T}'}(f):=f\circ\pi_{\mathcal{T}\mathcal{T}'}.
\end{equation*}
Using the same techniques of section 4, it is easily proved that for each $1\leq p<\infty$, the cocone $(L^{p}(\mathcal{C}_{0}(\mathbb{R}^{m}),\lambda_{0}),\varphi_{\mathcal{T}})$ defines a realization of the colimit of $(L^{p}(\mathbb{R}^{m\times \mathcal{T}},\lambda_{0}^{\mathcal{T}}),\eta_{\mathcal{T}\mathcal{T}'})$, where the morphisms $\varphi_{\mathcal{T}}$, $\mathcal{T}\in\mathcal{P}$, are given by
\begin{equation*}
\varphi_{\mathcal{T}}:L^{p}(\mathbb{R}^{m\times\mathcal{T}},\lambda_{0}^{\mathcal{T}})\longrightarrow L^{p}(\mathcal{C}_{0}(\mathbb{R}^{m}),\lambda_{0}), \quad \varphi_{\mathcal{T}}(f):=f\circ\pi_{\mathcal{T}}|_{\mathcal{C}_{\mathbf{x}_{0}}(\mathbb{R}^{m})}.
\end{equation*}
This establishes the approximation scheme for the classical Wiener space $(\mathcal{C}_{0}(\mathbb{R}^{m}),\lambda_{0})$. Now, we proceed to define the diagram and cocone for the geometric measure $\nu_{\mathcal{T}}$. 
\par Let us consider the diagram $(L^{p}\left(H_{\mathcal{T}}(M),\nu_{\mathcal{T}}\right),\varrho_{\mathcal{T}\mathcal{T}'})$ where for each $\mathcal{T}\subset\mathcal{T}'$, $\varrho_{\mathcal{T}\mathcal{T}'}$ are the morphisms defined by
\begin{equation*}
\varrho_{\mathcal{T}\mathcal{T}'}:L^{p}\left(H_{\mathcal{T}}(M),\nu_{\mathcal{T}}\right)\longrightarrow L^{p}\left(H_{\mathcal{T}'}(M),\nu_{\mathcal{T}'}\right),\quad \varrho_{\mathcal{T}\mathcal{T}'}:=\Lambda_{\mathcal{T}'}^{-1}\circ \eta_{\mathcal{T}\mathcal{T}'}\circ\Lambda_{\mathcal{T}},
\end{equation*}
in other words, the unique morphisms making the diagram of Figure 3 commutative.
\begin{figure}[h!]
	\[
\xymatrix{ 
	L^{p}\left(H_{\mathcal{T}}(M),\nu_{\mathcal{T}}\right) \ar@{-->}^{\varrho_{\mathcal{T}\mathcal{T}'}}[d]  \ar^{\Lambda_{\mathcal{T}}}[r]  &  L^{p}(\mathbb{R}^{m\times \mathcal{T}},\lambda^{\mathcal{T}}_{0}) \ar^{\eta_{\mathcal{T}\mathcal{T}'}}[d] \\
	L^{p}\left(H_{\mathcal{T}'}(M),\nu_{\mathcal{T}'}\right) \ar^{\Lambda_{\mathcal{T}'}}[r] & L^{p}(\mathbb{R}^{m\times \mathcal{T}'},\lambda^{\mathcal{T}'}_{0})
}
\]
\label{F3}
\caption{Definition of the morphisms $\varrho_{\mathcal{T}\mathcal{T}'}$}
\end{figure}

The main result of this subsection is to prove that the cocone $(L^{p}(\mathcal{C}_{\mathbf{x}_{0}}(M),\mu_{\mathbf{x}_{0}}),\theta_{\mathcal{T}})$ is a realization of the colimit of the diagram $(L^{p}\left(H_{\mathcal{T}}(M),\nu_{\mathcal{T}}\right),\varrho_{\mathcal{T}\mathcal{T}'})$, where the morphisms $\theta_{\mathcal{T}}$, $\mathcal{T}\in\mathcal{P}$, are defined by 
\begin{equation*}
\theta_{\mathcal{T}}: L^{p}\left(H_{\mathcal{T}}(M),\nu_{\mathcal{T}}\right)\longrightarrow L^{p}(\mathcal{C}_{\mathbf{x}_{0}}(M),\mu_{\mathbf{x}_{0}}),\quad \theta_{\mathcal{T}}:=\Lambda^{-1}\circ \varphi_{\mathcal{T}}\circ \Lambda_{\mathcal{T}}.
\end{equation*}
In other words, the morphisms $\theta_{\mathcal{T}}$ are the unique morphisms making commutative the diagram of Figure 4. This identification establishes an analogue of Theorem \ref{th4.7} for the geometrical framework. The definition of the involved morphisms $\varrho_{\mathcal{T}\mathcal{T}'}, \theta_{\mathcal{T}}$ are given through the commutativity of the diagrams of Figure 3 and 4, respectively, in order to make things natural, in the categorical meaning of the word. This will become clear in the proof of the main Theorem of this subsection, Theorem \ref{T}.
\begin{center}
\begin{figure}[h!]
	\[
	\xymatrix{ 
		L^{p}\left(H_{\mathcal{T}}(M),\nu_{\mathcal{T}}\right) \ar^{\Lambda_{\mathcal{T}}}[d]  \ar@{-->}^{\theta_{\mathcal{T}}}[r]  &  L^{p}(\mathcal{C}_{\mathbf{x}_{0}}(M),\mu_{\mathbf{x}_{0}}) \ar^{\Lambda}[d] \\
		L^{p}(\mathbb{R}^{m\times\mathcal{T}},\lambda_{0}^{\mathcal{T}}) \ar^{\varphi_{\mathcal{T}}}[r] & L^{p}(\mathcal{C}_{0}(\mathbb{R}^{m}),\lambda_{0})
	}
	\]
	\label{F4}
	\caption{Definition of the morphisms $\theta_{\mathcal{T}}$}
\end{figure}
\end{center}

\begin{theorem}\label{T}
	The cocone $(L^{p}(\mathcal{C}_{\mathbf{x}_{0}}(M),\mu_{\mathbf{x}_{0}}),\theta_{\mathcal{T}})$ defines a realization of the colimit of $(L^{p}\left(H_{\mathcal{T}}(M),\nu_{\mathcal{T}}\right),\varrho_{\mathcal{T}\mathcal{T}'})$.
\end{theorem}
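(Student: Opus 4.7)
The plan is to transport the colimit structure from the classical Wiener side to the geometric side via the two isometric isomorphisms $\Lambda$ and $\Lambda_{\mathcal{T}}$ provided by the (stochastic) Cartan development maps. Concretely, I would first invoke the flat-space analogue, stated in the paragraph before the theorem: the cocone $(L^{p}(\mathcal{C}_{0}(\mathbb{R}^{m}),\lambda_{0}),\varphi_{\mathcal{T}})$ realizes the colimit of $(L^{p}(\mathbb{R}^{m\times\mathcal{T}},\lambda_{0}^{\mathcal{T}}),\eta_{\mathcal{T}\mathcal{T}'})$. By Lemma \ref{L2.1}, this is equivalent to the density of $\bigcup_{\mathcal{T}\in\mathcal{P}}\varphi_{\mathcal{T}}(L^{p}(\mathbb{R}^{m\times\mathcal{T}},\lambda_{0}^{\mathcal{T}}))$ in $L^{p}(\mathcal{C}_{0}(\mathbb{R}^{m}),\lambda_{0})$.

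Next, I would check that $(L^{p}(\mathcal{C}_{\mathbf{x}_{0}}(M),\mu_{\mathbf{x}_{0}}),\theta_{\mathcal{T}})$ really is a cocone over $(L^{p}(H_{\mathcal{T}}(M),\nu_{\mathcal{T}}),\varrho_{\mathcal{T}\mathcal{T}'})$. For $\mathcal{T}\subset \mathcal{T}'$ one has, using the defining identities $\Lambda\circ\theta_{\mathcal{T}}=\varphi_{\mathcal{T}}\circ\Lambda_{\mathcal{T}}$ and $\varrho_{\mathcal{T}\mathcal{T}'}=\Lambda_{\mathcal{T}'}^{-1}\circ\eta_{\mathcal{T}\mathcal{T}'}\circ\Lambda_{\mathcal{T}}$, the chain
\begin{equation*}
\Lambda\circ\theta_{\mathcal{T}'}\circ\varrho_{\mathcal{T}\mathcal{T}'}
=\varphi_{\mathcal{T}'}\circ\Lambda_{\mathcal{T}'}\circ\Lambda_{\mathcal{T}'}^{-1}\circ\eta_{\mathcal{T}\mathcal{T}'}\circ\Lambda_{\mathcal{T}}
=\varphi_{\mathcal{T}'}\circ\eta_{\mathcal{T}\mathcal{T}'}\circ\Lambda_{\mathcal{T}}
=\varphi_{\mathcal{T}}\circ\Lambda_{\mathcal{T}}
=\Lambda\circ\theta_{\mathcal{T}},
\end{equation*}
and applying the isomorphism $\Lambda^{-1}$ yields $\theta_{\mathcal{T}'}\circ\varrho_{\mathcal{T}\mathcal{T}'}=\theta_{\mathcal{T}}$, as required.

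By Lemma \ref{L2.1} it then suffices to show that $\bigcup_{\mathcal{T}\in\mathcal{P}}\theta_{\mathcal{T}}(L^{p}(H_{\mathcal{T}}(M),\nu_{\mathcal{T}}))$ is dense in $L^{p}(\mathcal{C}_{\mathbf{x}_{0}}(M),\mu_{\mathbf{x}_{0}})$. Since $\Lambda$ is an isometric isomorphism, density is preserved under $\Lambda$, and by the commutativity of Figure 4 together with the bijectivity of each $\Lambda_{\mathcal{T}}$,
\begin{equation*}
\Lambda\!\left(\bigcup_{\mathcal{T}\in\mathcal{P}}\theta_{\mathcal{T}}(L^{p}(H_{\mathcal{T}}(M),\nu_{\mathcal{T}}))\right)
=\bigcup_{\mathcal{T}\in\mathcal{P}}\varphi_{\mathcal{T}}(\Lambda_{\mathcal{T}}(L^{p}(H_{\mathcal{T}}(M),\nu_{\mathcal{T}})))
=\bigcup_{\mathcal{T}\in\mathcal{P}}\varphi_{\mathcal{T}}(L^{p}(\mathbb{R}^{m\times\mathcal{T}},\lambda_{0}^{\mathcal{T}})),
\end{equation*}
which is dense in $L^{p}(\mathcal{C}_{0}(\mathbb{R}^{m}),\lambda_{0})$ by the flat-space result. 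Pulling back by $\Lambda^{-1}$ gives the required density, and Lemma \ref{L2.1} concludes the proof.

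There is no real obstacle here, beyond the bookkeeping of two commutative squares; the substance has already been absorbed into the construction of the morphisms $\theta_{\mathcal{T}}$ and $\varrho_{\mathcal{T}\mathcal{T}'}$ via the development-map identifications \eqref{EQG} and \eqref{P7.3}. The only mild point to be careful about is to note explicitly that the flat analogue of Theorem \ref{th4.7} holds verbatim by repeating the argument of Lemma \ref{L4.1} (the classical Wiener space $\mathcal{C}_{0}(\mathbb{R}^{m})$ is a Borel subset of $(\mathbb{R}^{m})^{[0,1]}$ of full $\lambda_{0}$-measure, and cylinder characteristic functions span a dense subspace of $L^{p}$), which the paragraph preceding the theorem already takes for granted.
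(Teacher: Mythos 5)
Your proposal is correct, and it reaches the conclusion by a somewhat different mechanism than the paper. The paper's proof stays at the categorical level: it observes that the family $(\Lambda_{\mathcal{T}})_{\mathcal{T}}$ of isometric isomorphisms from \eqref{EQG}, via the commutativity of Figure 3, is a natural isomorphism between the diagrams $(L^{p}(H_{\mathcal{T}}(M),\nu_{\mathcal{T}}),\varrho_{\mathcal{T}\mathcal{T}'})$ and $(L^{p}(\mathbb{R}^{m\times\mathcal{T}},\lambda_{0}^{\mathcal{T}}),\eta_{\mathcal{T}\mathcal{T}'})$, invokes the general fact that naturally isomorphic diagrams have isomorphic colimits (citing \cite[Cor. 3.6.3]{ER}), and then uses the commutativity of Figure 4 to exhibit $\Lambda$ as the required isomorphism of cocones, so the result follows from uniqueness of the colimit up to isomorphism. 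You instead reduce everything to the density criterion of Lemma \ref{L2.1}: you verify by a direct diagram chase that $\theta_{\mathcal{T}'}\circ\varrho_{\mathcal{T}\mathcal{T}'}=\theta_{\mathcal{T}}$ (so that $(L^{p}(\mathcal{C}_{\mathbf{x}_{0}}(M),\mu_{\mathbf{x}_{0}}),\theta_{\mathcal{T}})$ is genuinely a cocone, a point the paper's statement presupposes and whose verification is only implicit in its appeal to naturality), and you transport the density $\overline{\bigcup_{\mathcal{T}}\varphi_{\mathcal{T}}(L^{p}(\mathbb{R}^{m\times\mathcal{T}},\lambda_{0}^{\mathcal{T}}))}=L^{p}(\mathcal{C}_{0}(\mathbb{R}^{m}),\lambda_{0})$ through the isometric isomorphism $\Lambda$ of \eqref{P7.3}, using surjectivity of each $\Lambda_{\mathcal{T}}$. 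Both arguments rest on the same flat-space input (the analogue of Theorem \ref{th4.7} for $(\mathcal{C}_{0}(\mathbb{R}^{m}),\lambda_{0})$, which you correctly note must be justified by repeating Lemma \ref{L4.1}). What your route buys is self-containedness and an explicit check of the cocone identity using only tools already proved in the paper; what the paper's route buys is conceptual economy and generality, since the "isomorphic diagrams have isomorphic colimits" argument transfers verbatim to other identifications of this kind without re-running any density argument.
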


\begin{proof}
	By \eqref{EQG}, we have a family of isometric isomorphisms indexed by $\mathcal{T}$,
	\begin{equation*}
	\Lambda_{\mathcal{T}}:L^{p}\left(H_{\mathcal{T}}(M),\nu_{\mathcal{T}}\right)\to L^{p}(\mathbb{R}^{m\times \mathcal{T}},\mu^{\mathcal{T}}_{\mathbf{x}_{0}}).
	\end{equation*}
	By the commutativity of the diagram of Figure 3, this family of isomorphisms $(\Lambda_{\mathcal{T}})_{\mathcal{T}}$ establishes a natural isomorphism between the diagrams $(L^{p}\left(H_{\mathcal{T}}(M),\nu_{\mathcal{T}}\right),\varrho_{\mathcal{T}\mathcal{T}'})$ and $(L^{p}(\mathbb{R}^{m\times \mathcal{T}},\lambda_{0}^{\mathcal{T}}),\eta_{\mathcal{T}\mathcal{T}'})$. Hence the colimits of this two diagrams are isomorphic via a naturally defined isomorphism, see for instance the reference \cite[Cor. 3.6.3]{ER} for a proof of this fact. As the colimit of $(L^{p}(\mathbb{R}^{m\times \mathcal{T}},\lambda_{0}^{\mathcal{T}}),\eta_{\mathcal{T}\mathcal{T}'})$ is $(L^{p}(\mathcal{C}_{0}(\mathbb{R}^{m}),\lambda_{0}),\varphi_{\mathcal{T}})$, to prove the Theorem it is enough to prove that the cocone $(L^{p}(\mathcal{C}_{\mathbf{x}_{0}}(M),\mu_{\mathbf{x}_{0}}),\theta_{\mathcal{T}})$ is isomorphic to $(L^{p}(\mathcal{C}_{0}(\mathbb{R}^{m}),\lambda_{0}),\varphi_{\mathcal{T}})$ via a naturally defined isomorphism. The commutativity of the diagram of Figure 4, establishes that $\Lambda$ is our required isomorphism. This concludes the proof.
\end{proof}

\subsection{Derivation of the Limit Formula}

In this final subsection, we prove an analogue of Theorem \ref{C4.4} for the geometric measure $\nu_{\mathcal{T}}$. Let us denote by $\underrightarrow{\lim} \ L^{p}(\mathbb{R}^{m\times \mathcal{T}},\lambda_{0}^{\mathcal{T}})$ and $\underrightarrow{\lim} \ L^{p}\left(H_{\mathcal{T}}(M),\nu_{\mathcal{T}}\right)$, the analogues of the space defined by \eqref{E20}, under the natural changes to adapt it to $(\mathbb{R}^{m\times \mathcal{T}},\lambda_{0}^{\mathcal{T}})$ and $\left(H_{\mathcal{T}}(M),\nu_{\mathcal{T}}\right)$, respectively. We omit the explicit description of this spaces for notational convenience. Rephrasing the arguments of section 4, it can be proved that the operator
\begin{equation}\label{F7}
\mathfrak{I}_{p}:\underrightarrow{\lim} \ L^{p}(\mathbb{R}^{m\times \mathcal{T}},\lambda_{0}^{\mathcal{T}})\longrightarrow L^{p}(\mathcal{C}_{0}(\mathbb{R}^{m}),\lambda_{0}), \quad (f_{\mathcal{T}})_{\mathcal{T}\in\mathcal{P}}\mapsto \lim_{\mathcal{T}}\varphi_{\mathcal{T}}(f_{\mathcal{T}})
\end{equation}
is an isometric isomorphism. Thanks to this isomorphism and the identifications provided by \eqref{EQG} and \eqref{P7.3}, we prove the final result of this article, Theorem \ref{th7.2}. As a direct consequence of Theorem \ref{T} and the following Theorem \ref{th7.2}, we obtain that the cocone $$(\underrightarrow{\lim} \ L^{p}\left(H_{\mathcal{T}}(M),\nu_{\mathcal{T}}\right),\psi_{\mathcal{R}}),$$ where the morphisms $\psi_{\mathcal{R}}$ are the corresponding analogues of \eqref{An} for $(H_{\mathcal{T}}(M),\nu_{\mathcal{T}})$, defines a realization of the colimit of $(L^{p}\left(H_{\mathcal{T}}(M),\nu_{\mathcal{T}}\right),\varrho_{\mathcal{T}\mathcal{T}'})$.

\begin{theorem}\label{th7.2}
	Let $1\leq p<\infty$, then the following spaces are isometrically isomorphic
    \begin{equation}\label{ef}
\underrightarrow{\lim} \ L^{p}\left(H_{\mathcal{T}}(M),\nu_{\mathcal{T}}\right)\simeq L^{p}(\mathcal{C}_{\mathbf{x}_{0}}(M),\mu_{\mathbf{x}_{0}}).
    \end{equation}
	\noindent In consequence, for every $F\in L^{1}(\mathcal{C}_{\mathbf{x}_{0}}(M),\mu_{\mathbf{x}_{0}})$, there exists an element $(f_{\mathcal{T}})_{\mathcal{T}\in\mathcal{P}}\in\bigtimes_{\mathcal{T}\in\mathcal{P}}L^{1}\left(H_{\mathcal{T}}(M),\nu_{\mathcal{T}}\right)$ such that 
	\begin{equation}\label{TF2}
	\int_{\mathcal{C}_{\mathbf{x}_{0}}(M)}F \ d\mu_{\mathbf{x}_{0}}=\lim_{\mathcal{T}}\int_{H_{\mathcal{T}}(M)}f_{\mathcal{T}}\ d\nu^{\mathcal{T}}.
	\end{equation}
\end{theorem}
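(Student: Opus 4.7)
The plan is to mimic the construction from Theorem \ref{T1} in the geometric setting and then transfer integrals through the change-of-variables chain encoded in $\theta_{\mathcal{T}}=\Lambda^{-1}\circ\varphi_{\mathcal{T}}\circ\Lambda_{\mathcal{T}}$. Abstractly, Theorem \ref{T} together with the realization of a colimit as equivalence classes of co-Cauchy nets (as in Section 4) already forces \eqref{ef}; what remains is to exhibit a concrete isomorphism explicit enough to read off the limit formula \eqref{TF2}.

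First I would introduce the map
\begin{equation*}
\mathfrak{J}_{p}:\underrightarrow{\lim}\ L^{p}(H_{\mathcal{T}}(M),\nu_{\mathcal{T}})\longrightarrow L^{p}(\mathcal{C}_{\mathbf{x}_{0}}(M),\mu_{\mathbf{x}_{0}}),\quad (f_{\mathcal{T}})_{\mathcal{T}\in\mathcal{P}}\longmapsto \lim_{\mathcal{T}}\theta_{\mathcal{T}}(f_{\mathcal{T}}),
\end{equation*}
where the source is defined verbatim from \eqref{E20} with $(M^{\mathcal{T}},\mu^{\mathcal{T}}_{\mathbf{x}_{0}})$ replaced by $(H_{\mathcal{T}}(M),\nu_{\mathcal{T}})$ and $\pi^{\ast}_{\mathcal{T}\mathcal{T}'}$ by $\varrho_{\mathcal{T}\mathcal{T}'}$. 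I would repeat the four steps of Theorem \ref{T1} verbatim: well-definedness of the limit via the cocone relation $\theta_{\mathcal{T}}=\theta_{\mathcal{T}'}\circ\varrho_{\mathcal{T}\mathcal{T}'}$ for $\mathcal{T}\subset\mathcal{T}'$, which gives $\|\theta_{\mathcal{T}}(f_{\mathcal{T}})-\theta_{\mathcal{T}'}(f_{\mathcal{T}'})\|_{L^{p}}=\|\varrho_{\mathcal{T}\mathcal{T}'}(f_{\mathcal{T}})-f_{\mathcal{T}'}\|_{L^{p}_{\mathcal{T}'}}$ and lets the co-Cauchy property enter; the isometry identity $\|\mathfrak{J}_{p}(f_{\mathcal{T}})_{\mathcal{T}}\|_{L^{p}}=\lim_{\mathcal{T}}\|f_{\mathcal{T}}\|_{L^{p}_{\mathcal{T}}}$; and surjectivity obtained from the density of $\bigcup_{\mathcal{T}}\theta_{\mathcal{T}}(L^{p}(H_{\mathcal{T}}(M),\nu_{\mathcal{T}}))$ in $L^{p}(\mathcal{C}_{\mathbf{x}_{0}}(M),\mu_{\mathbf{x}_{0}})$ (a consequence of Theorem \ref{T} and Lemma \ref{L2.1}) combined with the countable-diagonal construction used in the final paragraph of the proof of Theorem \ref{T1}.

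For \eqref{TF2}, given $F\in L^{1}(\mathcal{C}_{\mathbf{x}_{0}}(M),\mu_{\mathbf{x}_{0}})$, surjectivity of $\mathfrak{J}_{1}$ furnishes $(f_{\mathcal{T}})\in\bigtimes_{\mathcal{T}}L^{1}(H_{\mathcal{T}}(M),\nu_{\mathcal{T}})$ with $\theta_{\mathcal{T}}(f_{\mathcal{T}})\to F$ in $L^{1}$. The core point is the integral-preserving property
\begin{equation*}
\int_{\mathcal{C}_{\mathbf{x}_{0}}(M)}\theta_{\mathcal{T}}(g)\ d\mu_{\mathbf{x}_{0}}=\int_{H_{\mathcal{T}}(M)} g\ d\nu_{\mathcal{T}},\qquad g\in L^{1}(H_{\mathcal{T}}(M),\nu_{\mathcal{T}}),
\end{equation*}
which is obtained by chaining three pushforward identities through $\theta_{\mathcal{T}}=\Lambda^{-1}\circ\varphi_{\mathcal{T}}\circ\Lambda_{\mathcal{T}}$: namely $\tilde{\Phi}_{\ast}\lambda_{0}=\mu_{\mathbf{x}_{0}}$ (invariance of the Wiener measure under stochastic development, from \eqref{P7.3}), $(\pi_{\mathcal{T}})_{\ast}\lambda_{0}=\lambda_{0}^{\mathcal{T}}$ (Kolmogorov consistency), and $(\Phi_{\mathcal{T}}\circ \Pi_{\mathcal{T}}^{-1})_{\ast}\lambda_{0}^{\mathcal{T}}=\nu_{\mathcal{T}}$ (equations \eqref{I1}--\eqref{I2}). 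Since $L^{1}$-convergence implies convergence of integrals, the inequality $\big|\int_{\mathcal{C}_{\mathbf{x}_{0}}(M)}F\ d\mu_{\mathbf{x}_{0}}-\int_{H_{\mathcal{T}}(M)} f_{\mathcal{T}}\ d\nu_{\mathcal{T}}\big|\leq \|F-\theta_{\mathcal{T}}(f_{\mathcal{T}})\|_{L^{1}}$ yields \eqref{TF2}.

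The main obstacle is not conceptual but combinatorial: one must verify that the three pushforward relations compose cleanly, so that $\theta_{\mathcal{T}}$ is a genuine integral-preserving isometry, with no Jacobian or Radon--Nikodym factors surviving. This is guaranteed by the way the diagrams of Figures 3 and 4 were set up, and no analysis beyond that of Section 4 and Subsections 6.1--6.3 is required.
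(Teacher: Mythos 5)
Your proposal is correct, and for the isomorphism \eqref{ef} it is essentially the paper's argument in unpacked form: your map $\mathfrak{J}_{p}(f_{\mathcal{T}})_{\mathcal{T}}=\lim_{\mathcal{T}}\theta_{\mathcal{T}}(f_{\mathcal{T}})$ coincides with the composite $\Lambda^{-1}\circ\mathfrak{I}_{p}\circ\Sigma_{p}$ that the paper builds, since $\theta_{\mathcal{T}}=\Lambda^{-1}\circ\varphi_{\mathcal{T}}\circ\Lambda_{\mathcal{T}}$; the paper simply transfers the whole colimit to the flat side via the componentwise isomorphism $\Sigma_{p}$ and reuses the flat realization \eqref{F7}, whereas you rerun the proof of Theorem \ref{T1} directly with $\theta_{\mathcal{T}}$ and $\varrho_{\mathcal{T}\mathcal{T}'}$ (the cocone identity $\theta_{\mathcal{T}}=\theta_{\mathcal{T}'}\circ\varrho_{\mathcal{T}\mathcal{T}'}$ and the chain-density needed for the diagonal construction do hold, transferring from the flat case through $\Lambda$ and $\Lambda_{\mathcal{T}}$, so no gap there). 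Where you genuinely diverge is in extracting \eqref{TF2}: the paper obtains it ``from the isometric property of the induced operator,'' i.e.\ the route of Theorem \ref{C4.4}, which applies the isometry to $|F|$ and then decomposes $F=F^{+}-F^{-}$; you instead prove that each $\theta_{\mathcal{T}}$ preserves integrals by chaining the pushforward identities $\tilde{\Phi}_{\ast}\lambda_{0}=\mu_{\mathbf{x}_{0}}$, $(\pi_{\mathcal{T}})_{\ast}\lambda_{0}=\lambda_{0}^{\mathcal{T}}$ and $(\Phi_{\mathcal{T}}\circ\Pi_{\mathcal{T}}^{-1})_{\ast}\lambda_{0}^{\mathcal{T}}=\nu_{\mathcal{T}}$, and then use $\bigl|\int F\,d\mu_{\mathbf{x}_{0}}-\int f_{\mathcal{T}}\,d\nu_{\mathcal{T}}\bigr|\leq\|F-\theta_{\mathcal{T}}(f_{\mathcal{T}})\|_{L^{1}}$. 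This is a legitimate and in fact cleaner derivation: it avoids the positive/negative decomposition (and the attendant question of whether the approximating net of $F^{+}$ can be taken nonnegative), at the cost of making explicit the measure-preservation of $\theta_{\mathcal{T}}$, which is indeed exactly what equations \eqref{I1}, \eqref{I2} and \eqref{P7.3} encode, so no Radon--Nikodym correction can appear.
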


\begin{proof}
	By \eqref{EQG}, we have a family of isometric isomorphisms indexed by $\mathcal{T}$,
	\begin{equation*}
	\Lambda_{\mathcal{T}}:L^{p}\left(H_{\mathcal{T}}(M),\nu_{\mathcal{T}}\right)\to L^{p}(\mathbb{R}^{m\times \mathcal{T}},\mu^{\mathcal{T}}_{\mathbf{x}_{0}}).
	\end{equation*}
	It is easily seen using the definition of the morphisms $\varrho_{\mathcal{T}\mathcal{T}'}$, Figure 3, that the operator 
	\begin{equation*}
	\Sigma_{p}:\underrightarrow{\lim} \ L^{p}\left(H_{\mathcal{T}}(M),\nu_{\mathcal{T}}\right)\longrightarrow \underrightarrow{\lim} \ L^{p}(\mathbb{R}^{m\times \mathcal{T}},\lambda_{0}^{\mathcal{T}}), \quad \Sigma_{p}(f_{\mathcal{T}})_{\mathcal{T}\in\mathcal{P}}:=(\Lambda_{\mathcal{T}}(f_{\mathcal{T}}))_{\mathcal{T}\in\mathcal{P}}
	\end{equation*}
	defines an isometric isomorphism. Finally, the composition operator
		\[
	\xymatrix{ 
		\underrightarrow{\lim} \ L^{p}\left(H_{\mathcal{T}}(M),\nu_{\mathcal{T}}\right) \ar^{\Sigma_{p}}[d]  \ar@{-->}[r]  &  L^{p}(\mathcal{C}_{\mathbf{x}_{0}}(M),\mu_{\mathbf{x}_{0}}) \\
		\underrightarrow{\lim} \ L^{p}(\mathbb{R}^{m\times \mathcal{T}},\lambda_{0}^{\mathcal{T}}) \ar^{\mathfrak{I}_{p}}[r] & L^{p}(\mathcal{C}_{0}(\mathbb{R}^{m}),\lambda_{0}) \ar^{\Lambda^{-1}}[u]
	}
	\]
	where the morphisms $\mathfrak{I}_{p}$ and $\Lambda$ are given by \eqref{F7} and \eqref{P7.3}, respectively, defines an isometric isomorphism. This concludes the proof of \eqref{ef}. Formula \eqref{TF2} follows from the isometric property of the induced operator.
\end{proof}

\section{Acknowledgements}

The author is very grateful to the animous referee for his/her extremely useful advises that without any doubt has improved the presentation and correctness of the article and has improved deeply the results of it. Moreover, the author is also very fortunate to have been advised by Professors Christian Bär and Matthias Ludewig, two experts in this field.

\end{document}